\newcommand{\1}{\mathds{1}}
\newcommand{\IC}{\mathbb{C}}
\newcommand{\IN}{\mathbb{N}}
\newcommand{\IR}{\mathbb{R}}
\renewcommand{\H}{\mathcal{H}}
\newcommand{\J}{\mathcal{J}}
\newcommand{\K}{\mathcal{K}}
\newcommand{\M}{\mathcal{M}}
\newcommand{\N}{\mathcal{N}}
\newcommand{\GE}{\mathrm{GE}}
\newcommand{\CGE}{\mathrm{CGE}}
\newcommand{\BE}{\mathrm{BE}}
\newcommand{\CBE}{\mathrm{CBE}}
\newcommand{\E}{\mathcal{E}}
\renewcommand{\L}{\mathcal{L}}
\newcommand{\Hess}{\operatorname{Hess}}
\newcommand{\Ent}{\operatorname{Ent}}
\newcommand{\norm}[1]{\lVert#1\rVert}
\newcommand{\abs}[1]{\lvert#1\rvert}
\newcommand{\id}{\mathrm{id}}
\newcommand{\W}{\mathcal{W}}
\newcommand{\I}{\mathcal{I}}
\newcommand{\cH}{\mathcal{H}}
\newcommand{\cM}{\mathcal{M}}
\newcommand{\cN}{\mathcal{N}}
\newcommand{\cP}{P}
\newcommand{\cI}{\mathcal{I}}
\newcommand{\cL}{\mathcal{L}}
\newcommand{\states}{\mathcal{S(M)}}
\renewcommand{\Re}{\operatorname{Re}}
\newcommand{\un}{\mathbf 1}
\theoremstyle{plain}
\newtheorem{theorem}{Theorem}[section]
\newtheorem{lemma}[theorem]{Lemma}
\newtheorem{proposition}[theorem]{Proposition}
\newtheorem{definition}[theorem]{Definition}
\theoremstyle{remark}
\newtheorem{remark}[theorem]{Remark}
\newtheorem{example}[theorem]{Example}
\newtheorem*{claim*}{Claim}
\newtheorem*{remark*}{Remark}
\newtheorem*{example*}{Example}
\newtheorem*{notation*}{Notation}
\numberwithin{equation}{section}
\title[Curvature-dimension conditions for quantum Markov semigroups]{Curvature-dimension conditions for symmetric quantum Markov semigroups}
\author{Melchior Wirth}
\address[M. Wirth]{Institute of Science and Technology Austria (IST Austria),
Am Campus 1, 3400 Klosterneuburg, Austria}
\email{melchior.wirth@ist.ac.at}
\author{Haonan Zhang}
\address[H. Zhang]{Institute of Science and Technology Austria (IST Austria),
Am Campus 1, 3400 Klosterneuburg, Austria}
\email{haonan.zhang@ist.ac.at}
\subjclass[2020]{Primary 81S22; Secondary 46L57, 47C99, 49Q22, 81R05}
\begin{document}

\begin{abstract}
Following up on the recent work on lower Ricci curvature bounds for quantum systems, we introduce two noncommutative versions of curvature-dimension bounds for symmetric quantum Markov semigroups over matrix algebras. 
	Under suitable such curvature-dimension conditions, we prove a family of dimension-dependent functional inequalities, a version of the Bonnet--Myers theorem and concavity of entropy power in the noncommutative setting. We also provide examples satisfying certain curvature-dimension conditions, including Schur multipliers over matrix algebras, Herz--Schur multipliers over group algebras and depolarizing semigroups.
\end{abstract}

\maketitle


\section{Introduction}

Starting with the celebrated work by Lott--Villani \cite{LV09} and Sturm \cite{Stu06a,Stu06b}, recent years have seen a lot of research interest in extending the notion of Ricci curvature, or more precisely lower Ricci curvature bounds, beyond the realm of classical differential geometry to spaces with singularities \cite{AGS14a,AGS14b,AGS15,EKS15}, discrete spaces \cite{Maa11,EM12,Mie13} or even settings where there is no underlying space at all as for example in noncommutative geometry \cite{CM17,MM17,Wir18,CM20,JLL20,WZ20,DR20}.

Most of these approaches take as their starting point either the characterization of lower Ricci curvature bound in terms of convexity properties of the entropy on Wasserstein space \cite{vRS05} or in terms of Bakry--Émery's $\Gamma_2$-criterion \cite{BE85}, which derives from Bochner's formula, and in many settings, these two approaches yield equivalent or at least closely related notions of lower Ricci curvature bounds.

One of the reasons to seek to extend the notion of Ricci curvature beyond Riemannian manifolds is that lower Ricci curvature bounds have strong geometric consequences and are a powerful tool in proving functional inequalities. This motivated the investigation of lower Ricci curvature bounds in the noncommutative setting, or for quantum Markov semigroups.

From a positive noncommutative lower Ricci curvature bound in terms of the $\Gamma_2$-condition, Junge and Zeng \cite{JZ15a,JZ15b} derived a $L_p$-Poincaré-type inequality and transportation inequalities, and under such non-negative lower Ricci curvature bounds Junge and Mei proved $L_p$-boundedness of Riesz transform \cite{JM10}. Following Lott--Sturm--Villani, Carlen and Maas \cite{CM14,CM17,CM20} studied the noncommutative lower Ricci curvature bound via the geodesic semi-convexity of entropy by introducing a noncommutative analog of the $2$-Wasserstein metric. The similar approach was carried out by the first-named author in the infinite-dimensional setting in \cite{Wir18}. These two notions of lower Ricci curvature bounds are in general different, but they can both be characterized in terms of a gradient estimate \cite{Wir18,CM20,WZ20}. A stronger notion of lower Ricci curvature bound, which implies the bound in terms of $\Gamma_2$-condition and in terms of transportation, was introduced by Li, Junge and LaRacuente \cite{JLL20}. See also the further work of Li \cite{Li20}, and Brannan, Gao and Junge \cite{BGJ20a,BGJ20b}.

\smallskip

However, for many applications in geometric consequences such as the Bonnet--Myers theorem, and functional inequalities such as the concavity of entropy power, a lower bound on the Ricci curvature is not sufficient, but one needs an upper bound on the dimension as well. This leads to the curvature-dimension condition, whose noncommutative analog will be the main object of this article. As a finite-dimensional analog of lower Ricci curvature bounds, the curvature-dimension condition also admits various characterizations. Similar to the ``infinite-dimensional'' setting, two main approaches describing curvature-dimension conditions are $\Gamma_2$-criterion following Bakry--Émery and convexity properties of entropy on the $2$-Wasserstein space in the spirit of Lott--Sturm--Villani. For metric measure spaces, the equivalence of various characterizations on curvature-dimension conditions and their applications have been extensively studied beginning with \cite{EKS15}.

While the notion of dimension is built into the definition of manifolds, it is not obvious in the extended settings and requires new definitions. The goal of this article is to provide such a definition of dimension (upper bounds) in the context of quantum Markov semigroups in a way that it fits well with the previously developed notions of lower Ricci curvature bounds in this framework. This definition allows us to prove interesting consequences on the geometry of the state space as well as some functional inequalities.

Furthermore, for quantum Markov semigroups satisfying an intertwining condition, which already appeared in \cite{CM17} and subsequent work, we provide an easily verifiable upper bound on the dimension, namely the number of partial derivatives in the Lindblad form of the generator. This sufficient condition enables us to prove the curvature-dimension condition in various concrete examples such as quantum Markov semigroups of Schur multipliers and semigroups generated by conditionally negative definite length functions on group algebras.

It should be mentioned that a notion of dimension for a quantum diffusion semigroup already appeared implicitly in the work of König and Smith on the quantum entropy power inequality \cite{KS14}. In particular, from their entropy power inequality one may also derive the concavity of entropy power for the associated quantum diffusion semigroup. See \cite{DPT18,HKV17,AB20} for more related work. 
This example fits conceptually well with our framework as it satisfies the intertwining condition and the dimension in the entropy power considered there is the number of partial derivatives in the Lindblad form of the generator, although the semigroup acts on an infinite-dimensional algebra and is therefore not covered by our finite-dimensional setting. Here we consider the concavity of the entropy power for arbitrary symmetric quantum Markov semigroups over matrix algebras.

In this paper we will focus on two noncommutative analogues of curvature-dimension conditions: the Bakry--Émery curvature dimension condition BE($K,N$), formulated via the $\Gamma_2$-condition, and the gradient estimate GE($K,N$), which is in the spirit of Lott--Sturm--Villani when the reference operator mean is chosen to be the logarithmic mean. They are generalizations of ``infinite-dimensional'' notions BE($K,\infty$) and GE($K,\infty$) in previous work, but let us address one difference in the ``finite-dimensional'' setting, i.e. $N<\infty$. As we mentioned above, in the ``infinite-dimensional'' case, i.e. $N=\infty$, GE($K,\infty$) recovers BE($K,\infty$) if the operator mean is the left/right trivial mean. However, this is not the case when $N<\infty$; BE($K,N$) is stronger than GE($K,N$) for the left/right trivial mean. 

\smallskip

This article is organized as follows. Section \ref{sec:QMS} collects preliminaries about quantum Markov semigroups and noncommutative differential calculus that are needed for this paper. In Section \ref{sec:BE} we study the noncommutative Bakry--Émery curvature-dimension condition BE($K,N$), its applications and the complete version. In Section \ref{sec:CD GE} we investigate the noncommutative gradient estimate GE($K,N$) for arbitrary operator means, give an equivalent formulation in the spirit of the $\Gamma_2$-criterion, and also introduce their complete form. Section \ref{sec:geodesic convexity} is devoted to the gradient estimate GE($K,N$), its connection to the geodesic $(K,N)$-convexity of the (relative) entropy and applications to dimension-dependent functional inequalities. In Section \ref{sec:examples} we give some examples of quantum Markov semigroups for which our main results apply. In Section \ref{sec:conclusion} we discuss how to extend the theory from this article to quantum Markov semigroups  that are not necessarily tracially symmetric and explain the main challenge in this case.

\subsection*{Acknowledgments} H.Z. is supported by the European Union's Horizon 2020 research and innovation programme under the Marie Sk\l odowska-Curie grant agreement No. 754411. M.W. acknowledges support from the European Research Council (ERC) under the European Union’s Horizon 2020 research and innovation programme (grant agreement No 716117) and from the Austrian Science Fund (FWF) through grant number F65. Both authors would like to thank Jan Maas for fruitful discussions and helpful comments.

\section{Quantum Markov semigroups and noncommutative differential calculus}
\label{sec:QMS}

In this section we give some background material on quantum Markov semigroups, their generators, first-order differential calculus and operator means.

\subsection{Quantum Markov semigroups}

Throughout we fix a finite-dimensional von Neumann algebra $\M$ with a faithful tracial state $\tau$. By the representation theory of finite-dimensional $C^\ast$-algebras, $\M$ is of the form $\bigoplus_{j=1}^n M_{k_j}(\IC)$ and $\tau=\bigoplus_{j=1}^n \alpha_j \mathrm{tr}_{M_{k_j}(\IC)}$ with $\alpha_j\geq 0$, $\sum_{j=1}^n \alpha_j k_j=1$. Here $M_n(\IC)$ denotes the full $n$-by-$n$ matrix algebra and $\mathrm{tr}_{M_{n}(\IC)}$ is the usual trace over $M_{n}(\IC)$.

\smallskip

Denote by $\M_+$ the set of positive semi-definite matrices in $\M$. A density matrix is a positive element $\rho\in \M$ with $\tau(\rho)=1$. The set of all density matrices is denoted by $\states$ and the set of all invertible density matrices by $\mathcal{S}_+(\M)$. We write $L_2(\M,\tau)$ for the Hilbert space obtained by equipping $\M$ with the inner product
\begin{equation*}
\langle \cdot,\cdot\rangle\colon \M\times\M\to\IC,\,(x,y)\mapsto \tau(x^\ast y).
\end{equation*}
The adjoint of a linear operator $T\colon \M\to\M$ with respect to this inner product is denoted by $T^\dagger$.

\smallskip

A family $(P_t)_{t\geq 0}$ of linear operators on $\M$ is called a \emph{quantum Markov semigroup} if 
\begin{enumerate}[(a)]
\item $P_0=\id_\M$, $P_{s+t}=P_sP_t$ for $s,t\geq 0$,
\item $P_t$ is completely positive for every $t\geq 0$,
\item $P_t \un=\un$ for every $t\geq 0$,
\item $t\mapsto P_t$ is continuous.
\end{enumerate}
The generator of $(P_t)$ is
\begin{equation*}
\L\colon \M\to\M,\,\L x=\lim_{t\searrow 0}\frac 1 t(x-P_t(x)).
\end{equation*}
It is the unique linear operator on $\M$ such that $P_t=e^{-t\L}$. Let us remark that sign conventions differ and sometimes $-\L$ is called the generator of $(P_t)$.
%

\smallskip

Let $\sigma\in \mathcal{S}_+(\M)$. The quantum Markov semigroup $(P_t)$ is said to satisfy the \emph{$\sigma$-detailed balance condition} ($\sigma$-DBC) if
\begin{equation*}
\tau(P_t(x)y\sigma)=\tau(x P_t(y)\sigma)
\end{equation*}
for $x,y\in \M$ and $t\geq 0$. In the special case $\sigma=\un$ we say that $(P_t)$ is \emph{tracially symmetric} or \emph{symmetric}, and denote
$$\E(a,b):=\langle a,\L b \rangle.$$ 
A tracially symmetric quantum Markov semigroup $(P_t)$ is \emph{ergodic} if $\un$ is the unique invariant state of $(P_t)$.

Although it is not necessary to \emph{formulate} the curvature-dimension conditions, we will deal exclusively with tracially symmetric quantum Markov semigroups since all examples where we can verify the conditions fall into that class. As a special case of Alicki's theorem \cite[Theorem 3]{Alicki76} (see also \cite[Theorem 3.1]{CM17}) the generator $\L$ of a tracially symmetric quantum Markov semigroup on $\M=M_n(\IC)$ is of the form
\begin{equation*}
\L=\sum_{j\in \J}\partial_j^\dagger\partial_j,
\end{equation*}
where $\J$ is a finite index set, $\partial_j=[v_j,\cdot\,]$ for some $v_j\in \M$, and for every $j\in \J$ there exists a unique $j^\ast\in \J$ such that $v_j^\ast=v_{j^\ast}$. We call the operators $\partial_j$ partial derivatives. Using the derivation operator $\partial:=(\partial_j)_{j\in\J}:\M\to  \hat{\cM}:=\oplus_{j\in\J}\cM$, we may also write $\L=\partial^\dagger\partial$.

\subsection{Noncommutative differential calculus and operator means}
Let us shortly recall the definition and some basic properties of operator means. Let $\H$ be an infinite-dimensional Hilbert space. A map $\Lambda\colon B(\cH)_+\times B(\cH)_+\to B(\cH)_+$ is called an \emph{operator connection} if it satisfies the following properties.
\begin{enumerate}[(a)]
	\item \emph{monotonicity}: if $A\leq C$ and $B\leq D$, then $\Lambda(A,B)\leq\Lambda(C,D)$,
	\item \emph{transformer inequality}: $C \Lambda(A,B)C\leq \Lambda(C A C,C B C)$ for any $A,B,C\in B(\cH)_+$,
	\item \emph{continuity}: $A_n\searrow A$ and $B_n\searrow B$ imply $\Lambda(A_n,B_n)\searrow \Lambda(A,B)$.
\end{enumerate}
An operator connection $\Lambda$ is called an \emph{operator mean} if it additionally satisfies
\begin{enumerate}[(a)]
\setcounter{enumi}{3}
	\item $\Lambda(\id_{\cH},\id_{\cH})=\id_{\cH}$.
\end{enumerate}
Here by $A_n\searrow A$ we mean $A_1\ge A_2\ge \cdots$ and  $A_n$ converges strongly to $A$.
The operator connection $\Lambda$ is \emph{symmetric} if $\Lambda(A,B)=\Lambda(B,A)$ for all $A,B\in B(\cH)_+$.

\begin{lemma}\label{lem:mean}
	Let $\Lambda$ be an operator connection. Then for $\lambda\ge 0$, $A,B,C,D\in B(\cH)_+$ and unitary $U\in B(\cH)$, we have
	\begin{enumerate}[(a)]
		\item positive homogeneity: $\Lambda(\lambda A,\lambda B)=\lambda\Lambda (A,B)$,
		\item concavity: $\Lambda (A,C)+\Lambda (B,D)\le \Lambda (A+B,C+D)$,
		\item unitary invariance: $\Lambda(U^\ast A U,U^\ast B U)=U^\ast\Lambda(A,B)U$.
	\end{enumerate}
If $\Lambda$ is an operator mean, then additionally
\begin{enumerate}[(a)]
\setcounter{enumi}{3}
 		\item $\Lambda(A,A)=A$.
\end{enumerate}	
\end{lemma}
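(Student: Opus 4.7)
The plan is to prove the four items in the order (a), (c), (d), (b), since each step is used in the next. For the positive homogeneity in (a), I would apply the transformer inequality with $C=\sqrt{\lambda}\,\id_{\cH}$ for $\lambda>0$ to get $\lambda\Lambda(A,B)\le\Lambda(\lambda A,\lambda B)$, and then apply it with $C=\lambda^{-1/2}\,\id_{\cH}$ to the pair $(\lambda A,\lambda B)$ for the reverse inequality. The case $\lambda=0$ is handled via the scaling identity $\Lambda(0,0)=2\Lambda(0,0)$ (which forces $\Lambda(0,0)=0$) together with continuity.

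For the unitary invariance (c), I would apply the transformer inequality in its Kubo--Ando form $T^{\ast}\Lambda(A,B)T\le\Lambda(T^{\ast}AT,T^{\ast}BT)$ for all bounded $T$ (the statement in the excerpt for positive $C$ is a special case that is customarily upgraded to general $T$). Taking $T=U$ gives $U^{\ast}\Lambda(A,B)U\le\Lambda(U^{\ast}AU,U^{\ast}BU)$, and taking $T=U^{\ast}$ on the pair $(U^{\ast}AU,U^{\ast}BU)$ gives the reverse inequality; combining them yields equality. For item (d) of an operator mean, note that applying the transformer inequality with $C=A^{1/2}$ and with $C=A^{-1/2}$ on the image pair forces equality when $A$ is positive invertible, so $\Lambda(A,A)=A^{1/2}\Lambda(\id_{\cH},\id_{\cH})A^{1/2}=A$ using the mean axiom. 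The general case $A\ge 0$ is then obtained by approximating $A+(1/n)\,\id_{\cH}\searrow A$ and invoking continuity.

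The most substantive step is the concavity (b), which I would reduce to the previous items by a block-matrix/isometry trick. On $\cH\oplus\cH$ set $\tilde X=A\oplus B$ and $\tilde Y=C\oplus D$. The block-diagonal unitary $\id\oplus(-\id)$ commutes with $\tilde X$ and $\tilde Y$, so by unitary invariance it also commutes with $\Lambda(\tilde X,\tilde Y)$, forcing the latter to be block diagonal; conjugating by the isometric embeddings of each summand and using the transformer inequality identifies the blocks as $\Lambda(A,C)$ and $\Lambda(B,D)$. Now take $V\colon\cH\to\cH\oplus\cH$, $V\xi=(\xi,\xi)/\sqrt{2}$, so that $V^{\ast}\tilde X V=(A+B)/2$ and $V^{\ast}\tilde Y V=(C+D)/2$. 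The transformer inequality applied with $V$, followed by positive homogeneity, gives
\[
\tfrac12\bigl(\Lambda(A,C)+\Lambda(B,D)\bigr)=V^{\ast}\Lambda(\tilde X,\tilde Y)V\le\Lambda\bigl(\tfrac{A+B}{2},\tfrac{C+D}{2}\bigr)=\tfrac12\Lambda(A+B,C+D),
\]
which is the claimed concavity inequality.

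The main obstacle is that both (c) and (b) naturally require the transformer inequality for non-positive transformers, namely a unitary in (c) and an isometry between different Hilbert spaces in (b). The excerpt's statement only covers positive $C$ on a single space, so strictly speaking one needs either the stronger Kubo--Ando transformer inequality for arbitrary bounded maps or, alternatively, a detour through the Kubo--Ando integral representation of $\Lambda$ from which these properties can be read off. Once that extension is available, the rest of the argument is routine bookkeeping.
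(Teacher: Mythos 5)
Your proposal is correct in substance, but it takes a genuinely different route from the paper: the paper does not prove this lemma at all, it simply points to Kubo--Ando \cite{KA80} (equations (II$_0$), (2.1), Theorems 3.3 and 3.5), whereas you attempt a derivation from the three listed axioms. What your route buys is the observation that (a) and (d) really do follow from the axioms as stated (positive transformers, monotonicity, downward continuity, plus normalization for means); those two arguments are complete and clean. For (c) and (b), however, you need the transformer inequality for non-positive transformers --- a unitary in (c), and isometries/embeddings between $\cH$ and $\cH\oplus\cH$ in (b) --- which is exactly Theorem 3.3 of \cite{KA80}, proved there via the integral representation of connections; so for these two items your argument ultimately imports the same Kubo--Ando machinery that the paper cites, rather than giving a more elementary alternative (you flag this yourself, which is fair). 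Two smaller points in the block-matrix step of (b): first, to apply $\Lambda$ to operators on $\cH\oplus\cH$ you should fix a unitary identification $\cH\oplus\cH\cong\cH$, which is harmless once (c) is in hand; second, the inequality you actually need on the diagonal blocks is $\iota_1^\ast\Lambda(A\oplus B,C\oplus D)\iota_1\geq\Lambda(A,C)$ (and similarly for the second block), while the transformer inequality applied with the embedding $\iota_1$ only yields the opposite bound $\iota_1^\ast\Lambda(A\oplus B,C\oplus D)\iota_1\leq\Lambda(A,C)$. The needed direction follows from the generalized transformer inequality applied with $\iota_1^\ast$, giving $\iota_1\Lambda(A,C)\iota_1^\ast\leq\Lambda(A\oplus 0,C\oplus 0)$, then monotonicity $\Lambda(A\oplus 0,C\oplus 0)\leq\Lambda(A\oplus B,C\oplus D)$, and finally compression by $\iota_1^\ast(\cdot)\iota_1$. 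With that spelled out (and the off-diagonal blocks killed by your unitary-commutation argument), the concavity inequality follows as you state.
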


\begin{proof}
	See equations (II$_0$), (2.1), Theorem 3.3 and Theorem 3.5 in \cite{KA80}.
\end{proof}
While operator connections are initially only defined for bounded operators on an infinite-dimensional Hilbert space, one can easily extend this definition to operators on finite-dimensional Hilbert spaces as follows. If $\Lambda$ is an operator connection, $\H$ is a finite-dimensional Hilbert space and $A,B\in B(\H)_+$, then one can define $\Lambda(A,B)$ as $V^\ast\Lambda(VAV^\ast,VBV^\ast)V$, where $V$ is an isometric embedding of $\H$ into an infinite-dimensional Hilbert space. The unitary invariance from the previous lemma ensures that this definition does not depend on the choice of the embedding $V$.

\smallskip

Let $L(\rho)$ and $R(\rho)$ be the left and right multiplication operators, respectively, and fix an operator mean $\Lambda$. For $\rho\in\cM_+$ we define
\begin{equation*}
\hat \rho=\Lambda(L(\rho), R(\rho)).
\end{equation*}
Of particular interest for us are the cases when $\Lambda$ is the \emph{logarithmic mean}
\begin{equation*}
\Lambda_{\text{log}}(L(\rho), R(\rho))=\int_{0}^{1}L(\rho)^s R(\rho)^{1-s}\,ds,
\end{equation*}
or the \emph{left/right trivial mean}
\begin{equation*}
\Lambda_{\text{left}}(L(\rho), R(\rho))=L(\rho),~~\Lambda_{\text{right}}(L(\rho), R(\rho))=R(\rho).
\end{equation*}
With $\Lambda=\Lambda_{\text{log}}$ being the logarithmic mean, we have the chain rule identity for $\log$ (see \cite[Lemma 5.5]{CM17} for a proof):
\begin{equation*}
\partial\rho=\hat{\rho}\partial\log\rho=\int_{0}^1\rho^s(\partial \log\rho) \rho^{1-s}ds.
\end{equation*}
Here and in what follows, we use the notation 
$$\hat{\rho}(x_1,\dots,x_n):=(\hat{\rho} x_1,\dots, \hat{\rho}x_n).$$

\section{\texorpdfstring{Bakry--Émery curvature-dimension condition BE($K,N$)}{Bakry-Émery curvature-dimension condition BE(K,N)}}

\label{sec:BE}


This section is devoted to the noncommutative analog of the Bakry--Émery curvature-dimension condition BE($K,N$) defined by the $\Gamma_2$-criterion. After giving the definition, we will show that it is satisfied for certain generators in Lindblad form, where the dimension parameter $N$ is given by the number of partial derivatives. We will then prove that $\BE(K,N)$ implies an improved Poincaré inequality. In the final part of this section we study a complete version of $\BE(K,N)$, called $\CBE(K,N)$, and show that it has the expected tensorization properties.

\subsection{\texorpdfstring{Bakry--Émery curvature-dimension condition BE($K,N$)}{Bakry-Émery curvature-dimension condition BE(K,N)}}
Let $(P_t)$ be a quantum Markov semigroup on $\M$ with generator $\L$. The associated \emph{carré du champ operator} $\Gamma$ is defined as 
\begin{equation*}
\Gamma(a,b):=\frac{1}{2}\left( a^\ast \cL b+(\cL a)^\ast b-\cL(a^\ast b)\right),
\end{equation*}
and the \emph{iterated carré du champ operator} $\Gamma_2$ is defined as
\begin{equation*}
\Gamma_2(a,b):=\frac{1}{2}\left( \Gamma(a,\cL b) +\Gamma(\cL a,b)-\cL\Gamma(a, b)\right).
\end{equation*}
As usual, we write $\Gamma(a)$ for $\Gamma(a,a)$ and $\Gamma_2(a)$ for $\Gamma_2(a,a)$.

\begin{proposition}\label{prop:char_BE}
Let $K\in\IR$ and $N\in (0,\infty]$. For a quantum Markov semigroup $(P_t)$ over $\M$ with generator $\L$, the following are equivalent:
\begin{enumerate}[(a)]
	
	\item for any $t\ge 0$ and any $a\in \M$:
	\begin{equation*}
	\Gamma(P_t a)\le e^{-2Kt}P_t\Gamma(a)-\frac{1-e^{-2Kt}}{KN}|\cL P_t a|^2,
	\end{equation*}
	\item for any $a\in \M$:
	\begin{equation*}
	\Gamma_2(a)\ge K\Gamma(a)+\frac{1}{N}|\cL a|^2.
	\end{equation*}

\end{enumerate}	
If this is the case, we say the semigroup $(P_t)$ satisfies Bakry--Émery curvature-dimension condition $\BE(K,N)$.
\end{proposition}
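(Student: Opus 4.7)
The plan is to run the classical Bakry--Émery semigroup-interpolation argument, carefully adapted to the noncommutative setting.

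For the implication (b) $\Rightarrow$ (a), I would fix $t>0$ and $a\in\M$ and consider the interpolation
$$\Phi(s)=e^{-2Ks}\,P_s\,\Gamma(P_{t-s}a),\qquad s\in[0,t],$$
so that $\Phi(0)=\Gamma(P_t a)$ and $\Phi(t)=e^{-2Kt}P_t\Gamma(a)$. Differentiating in $s$, using the Leibniz rule for the sesquilinear form $\Gamma$ together with the definition of $\Gamma_2$, and noting that $\L$ commutes with $P_s$ so that the $\L\Gamma$-terms coming from $\partial_s P_s$ and from $\partial_s\Gamma(P_{t-s}a)$ cancel, one arrives at
$$\Phi'(s)=-2K\Phi(s)+2e^{-2Ks}P_s\,\Gamma_2(P_{t-s}a).$$
Inserting the hypothesis (b) applied to $P_{t-s}a$ cancels the $K$-terms, and the Kadison--Schwarz inequality for the unital completely positive map $P_s$, applied to $x=\L P_{t-s}a$ and combined with $P_s\L=\L P_s$, yields $P_s|\L P_{t-s}a|^2\geq |\L P_t a|^2$. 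The resulting bound $\Phi'(s)\geq (2/N)\,e^{-2Ks}|\L P_t a|^2$ integrates over $[0,t]$ to give exactly (a) after evaluating $\int_0^t e^{-2Ks}\,ds=(1-e^{-2Kt})/(2K)$. The borderline case $K=0$ is handled by taking $K\to 0$.

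For the converse (a) $\Rightarrow$ (b), both sides of (a) coincide at $t=0$, so the inequality forces a comparison of the time-derivatives there. A short computation using the Leibniz rule gives the $t=0$ derivatives $-\Gamma(\L a,a)-\Gamma(a,\L a)$ on the left-hand side and $-2K\Gamma(a)-\L\Gamma(a)-(2/N)|\L a|^2$ on the right-hand side. The inequality between these, rearranged using the very definition $2\Gamma_2(a)=\Gamma(\L a,a)+\Gamma(a,\L a)-\L\Gamma(a)$, is precisely (b).

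I do not anticipate serious obstacles. The algebra is entirely parallel to the commutative proof of the Bakry--Émery $\Gamma_2$-criterion. The only genuinely noncommutative ingredient is the Kadison--Schwarz step, which substitutes for the trivial pointwise estimate $|f|^2=f^2$ in the scalar case and is available because each $P_s$ is a unital completely positive map on $\M$. Some care is needed when applying the Leibniz rule to the sesquilinear form $\Gamma$, but since all quantities ultimately appearing in (a) and (b) are Hermitian, no genuine complication arises.
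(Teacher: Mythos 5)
Your proposal is correct and follows essentially the same route as the paper: the implication (b) $\Rightarrow$ (a) uses the same interpolant $s\mapsto e^{-2Ks}P_s\Gamma(P_{t-s}a)$ together with the Kadison--Schwarz inequality and the commutation $P_s\L=\L P_s$, and (a) $\Rightarrow$ (b) is the same derivative-at-$t=0$ comparison (the paper phrases it as $\phi(0)=0$, $\phi\ge 0$ implies $\phi'(0)\ge 0$, which is exactly your explicit computation). The only cosmetic difference is your remark that the $\L\Gamma$-terms ``cancel''; more precisely they combine with the $\Gamma(\L\cdot,\cdot)+\Gamma(\cdot,\L\cdot)$ terms to produce $2P_s\Gamma_2(P_{t-s}a)$, which is the identity you in fact use.
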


\begin{proof}
	The proof is essentially based on the following identities: For $s\in [0,t]$,
	\begin{equation*}
	\frac{d}{ds}P_s((P_{t-s}a)^\ast (P_{t-s}a))=2P_s \Gamma(P_{t-s}a),
	\end{equation*}
	and 
	\begin{equation*}
	\frac{d}{ds}P_s \Gamma(P_{t-s}a)=2P_s \Gamma_2(P_{t-s}a),
	\end{equation*}
	which follow by direct computations. 
	To prove $(a)\implies (b)$, we set 
	\begin{equation*}
	\phi(t):=e^{-2Kt}P_t\Gamma(a)-\Gamma(P_t a)-\frac{1-e^{-2Kt}}{KN}|\cL P_t a|^2.
	\end{equation*}
	Since $\phi(t)\ge 0$ for all $t\ge 0$ and $\phi(0)=0$, we have $\phi'(0)\ge 0$, which is nothing but (b).
	
	To show $(b)\implies (a)$, we put for any $t>0$: 
	\begin{equation*}
	\varphi(s):=e^{-2Ks}P_s\Gamma(P_{t-s}a),~~s\in [0,t].
	\end{equation*}
	Then by assumption and Kadison-Schwarz inequality,
	\begin{equation*}
	\varphi'(s)
	=2e^{-2Ks}P_s\left(\Gamma_2(P_{t-s}a)-K\Gamma(P_{t-s}a)\right)
	\ge \frac{2e^{-2Ks}}{N}P_s\left(|\cL P_{t-s}a|^2\right)
	\ge \frac{2e^{-2Ks}}{N}|\cL P_{t}a|^2.
	\end{equation*}
	So 
	\begin{equation*}
	\varphi(t)-\varphi(0)=\int_{0}^{t}\varphi'(s)ds
	\ge \frac{2}{N}\int_{0}^{t}e^{-2Ks}ds|\cL P_{t}a|^2
	=\frac{1-e^{-2Kt}}{KN}|\cL P_{t}a|^2,
	\end{equation*}
	which proves (a).
\end{proof}

\begin{remark}\label{rk:implicit constant_BE}
	From the proof one can see that the function 
	$$t\mapsto \frac{1-e^{-2Kt}}{KN},$$ 
	in (a) can be replaced by any $f$ such that $f(0)=0$ and $f'(0)=2/N$.
\end{remark}

\begin{remark}
	The notion $\BE(K,N)$ is clearly consistent: If $(P_t)$ satisfies $\BE(K,N)$, then it also satisfies $\BE(K',N')$ for all $K'\le K$ and $N'\ge N$.
\end{remark}

\begin{remark}
While all our examples of quantum Markov semigroups satisfying $\BE$ are tracially symmetric, let us point out that this is not necessary for the definition nor for the results in the rest of this section with the exception of Proposition \ref{prop:BE_intertwining}. See also the discussion in Section \ref{sec:conclusion}.
\end{remark}

We shall give a sufficient condition for Bakry--Émery curvature-dimension condition BE($K,N$). Before that we need a simple inequality. 

\begin{lemma}\label{lem:simple ineq}
	For any $a_{j},1\le j\le d$, in a C*-algebra, we have 
	\begin{equation*}
	\sum_{j=1}^{d}|a_{j}|^2\ge \frac{1}{d}\left|\sum_{j=1}^{d}a_{j}\right|^2.
	\end{equation*}
\end{lemma}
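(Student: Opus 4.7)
The plan is to reduce the claim to the positivity of a sum of squares in the C*-algebra. Writing $|b|^2 = b^*b$ and expanding, the right-hand side becomes
\begin{equation*}
\left|\sum_{j=1}^d a_j\right|^2 = \sum_{i,j=1}^d a_i^* a_j,
\end{equation*}
so the asserted inequality is equivalent to
\begin{equation*}
d\sum_{j=1}^d a_j^* a_j \;\ge\; \sum_{i,j=1}^d a_i^* a_j.
\end{equation*}

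The key step is the algebraic identity
\begin{equation*}
\sum_{i,j=1}^d (a_i - a_j)^*(a_i - a_j) \;=\; 2d\sum_{j=1}^d a_j^* a_j \;-\; 2\sum_{i,j=1}^d a_i^* a_j,
\end{equation*}
which is verified by expanding each term on the left and collecting. Since every summand on the left is of the form $x^*x$, it is positive in the C*-algebra, hence the whole left-hand side is positive. Dividing by $2d$ yields the lemma.

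An equivalent (and essentially the same) route would be to invoke the noncommutative Cauchy--Schwarz inequality, viewing $(a_1,\ldots,a_d)$ as an element of $M_{1,d}(A)$ and pairing it with the row $(1,\ldots,1)$; but the sum-of-squares argument above is entirely self-contained. Since the statement is purely algebraic and boils down to the positivity of $x^*x$, I do not anticipate any genuine obstacle.
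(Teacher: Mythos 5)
Your proof is correct and uses exactly the same sum-of-squares identity as the paper, namely that $d\sum_j a_j^\ast a_j - \sum_{i,j} a_i^\ast a_j$ equals $\tfrac12\sum_{i,j}\abs{a_i-a_j}^2 \ge 0$. No issues.
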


\begin{proof}
	In fact, 
	\begin{equation*}
	d\sum_{j=1}^{d}|a_{j}|^2-\left|\sum_{j=1}^{d}a_{j}\right|^2
	=\frac{1}{2}\sum_{j,k=1}^{d}\left(a_j^\ast a_j+a_k^\ast a_k-a_j^\ast a_k-a_k^\ast a_j\right)
	=\frac{1}{2}\sum_{j,k=1}^{d}|a_j-a_k|^2
	\ge 0.\qedhere
	\end{equation*}
\end{proof}

\begin{definition}
	Suppose that $\mathcal{L}$ is the generator of the tracially symmetric quantum Markov semigroup $(P_t)$ with the Lindblad form:
	\begin{equation*}\label{eq:lindblad}
	\mathcal{L}=\sum_{j=1}^{d}\partial_j^\dagger \partial_j,\tag{LB}
	\end{equation*}
	where $\partial_j(\cdot)=[v_j,\cdot]$ with the adjoint being $\partial_j^\dagger(\cdot)=[v_j^\ast,\cdot]$, and $\{v_j\}=\{v_j^\ast\}$. Then we say $(P_t)$ satisfies the $K$-intertwining condition for some $K\in\mathbb{R}$ if 
	\begin{equation*}
	\partial_j P_t=e^{-K t}P_t\partial_j,~~1\le j\le d,
	\end{equation*}
	or equivalently
	\begin{equation*}
	\partial_j \cL=\cL\partial_j+K\partial_j,~~1\le j\le d.
	\end{equation*}
\end{definition}

\begin{proposition}\label{prop:BE_intertwining}
	Suppose that the generator $\cL$ of the tracially symmetric quantum Markov semigroup $(P_t)$ admits the Lindblad form \eqref{eq:lindblad}. Then for any $a$,
	\begin{equation}\label{eq:gamma2 bochner}
	\Gamma_2(a)=\Re\sum_{j=1}^{d}(\partial_j\mathcal{L}a-\mathcal{L}\partial_j a)^\ast\partial_j a+\sum_{j,k=1}^{d}|\partial_k^\dagger\partial_j a|^2.
	\end{equation}
	If $(P_t)$ satisfies the $K$-intertwining condition for $K\in\mathbb{R}$, then $(P_t)$ satisfies $\BE(K,d)$.
\end{proposition}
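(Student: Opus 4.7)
The plan is to derive the Bochner-type identity \eqref{eq:gamma2 bochner} by a direct computation from the Lindblad form, and then read off $\BE(K,d)$ by plugging in the intertwining relation and applying Lemma \ref{lem:simple ineq}.

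First I would establish the convenient expression $\Gamma(a,b)=\sum_{j=1}^d(\partial_j a)^\ast \partial_j b$. The key algebraic facts are that $\partial_j$ and $\partial_j^\dagger$ are both derivations (as commutators with $v_j$ and $v_j^\ast$ respectively), that $(\partial_j x)^\ast=-\partial_j^\dagger(x^\ast)$, and that by assumption $\{v_j\}=\{v_j^\ast\}$, so relabelling $j\mapsto j^\ast$ leaves sums of the form $\sum_j\,\cdot\,$ invariant. Expanding $\L(a^\ast b)=\sum_j\partial_j^\dagger\partial_j(a^\ast b)$ via the Leibniz rule, using $(\L a)^\ast=\L(a^\ast)$ (tracial symmetry), and cancelling with the two other terms in the definition of $\Gamma(a,b)$ yields the formula above. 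In particular $\Gamma(a)=\sum_j|\partial_j a|^2$.

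Next I would compute $\Gamma_2(a)$ from this. The symmetric part $\tfrac12(\Gamma(a,\L a)+\Gamma(\L a,a))$ directly equals $\Re\sum_j(\partial_j\L a)^\ast\partial_j a$. For the term $\L\Gamma(a)=\L\sum_j|\partial_j a|^2$, I apply the same product formula $\L(x^\ast y)=(\L x)^\ast y+x^\ast\L y-2\sum_k(\partial_k x)^\ast\partial_k y$ (derived in the previous step) with $x=y=\partial_j a$; this produces the term $-2\Re\sum_j(\L\partial_j a)^\ast\partial_j a$ plus the positive contribution $2\sum_{j,k}|\partial_k\partial_j a|^2$. Combining,
\begin{equation*}
\Gamma_2(a)=\Re\sum_{j=1}^d(\partial_j\L a-\L\partial_j a)^\ast\partial_j a+\sum_{j,k=1}^d|\partial_k\partial_j a|^2.
\end{equation*}
Finally, using once more the symmetry $\{v_k\}=\{v_k^\ast\}$, the sum $\sum_{j,k}|\partial_k\partial_j a|^2$ coincides with $\sum_{j,k}|\partial_k^\dagger\partial_j a|^2$, which is \eqref{eq:gamma2 bochner}.

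For the second assertion, the $K$-intertwining condition rewrites $\partial_j\L-\L\partial_j=K\partial_j$, so the first summand in \eqref{eq:gamma2 bochner} becomes $K\sum_j|\partial_j a|^2=K\Gamma(a)$. For the second, I drop the off-diagonal terms and keep only $k=j$ to get $\sum_{j,k}|\partial_k^\dagger\partial_j a|^2\ge\sum_{j=1}^d|\partial_j^\dagger\partial_j a|^2$. Applying Lemma \ref{lem:simple ineq} with $a_j=\partial_j^\dagger\partial_j a$ and noting $\sum_j\partial_j^\dagger\partial_j a=\L a$ gives $\sum_j|\partial_j^\dagger\partial_j a|^2\ge\tfrac{1}{d}|\L a|^2$, hence $\Gamma_2(a)\ge K\Gamma(a)+\tfrac{1}{d}|\L a|^2$, which is $\BE(K,d)$ by Proposition \ref{prop:char_BE}.

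The main obstacle is the bookkeeping in the first paragraph: the generator is written in the form $\sum_j\partial_j^\dagger\partial_j$, but both $\partial_j$ and $\partial_j^\dagger$ act as derivations with a sign-changing interaction with $\ast$, and one must use the symmetry of the index set at the right moment to replace sums involving $\partial_j^\dagger$ by sums involving $\partial_j$ (and vice versa). Once the identity $\Gamma(a,b)=\sum_j(\partial_j a)^\ast\partial_j b$ is in hand, the rest of the argument is essentially one differentiation and an application of Lemma \ref{lem:simple ineq}.
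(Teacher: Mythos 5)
Your proposal is correct and follows essentially the same route as the paper: derive $\Gamma(a,b)=\sum_j(\partial_j a)^\ast\partial_j b$ from the Leibniz rule, $(\partial_j x)^\ast=-\partial_j^\dagger(x^\ast)$ and the symmetry $\{v_j\}=\{v_j^\ast\}$, apply the same product formula to $\L\Gamma(a)$ to get the Bochner identity, then use intertwining and Lemma \ref{lem:simple ineq} for $\BE(K,d)$. The only blemish is a factor-of-$\tfrac12$ slip in your narrative of the $\L\Gamma(a)$ contribution, which does not affect the (correctly stated) final identity.
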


\begin{proof}
	 Note that 
	\begin{equation*}
	(\partial_j a)^\ast=a^\ast v_j^\ast-v_j^\ast a^\ast=-\partial_j^\dagger(a^\ast).
	\end{equation*}
	This, together with the Leibniz rule for $\partial_j$'s (so also $\partial_j^\dagger$'s),  and the fact that $\{\partial_j\}=\{\partial_j^\dagger\}$, yields
	\begin{align*}
	\cL(a^\ast b)
	&=\sum_{j=1}^{d}(\partial_j^\dagger \partial_ja^\ast)b+a\partial_j^\dagger \partial_j b+(\partial_j^\dagger a)(\partial_j b)+(\partial_j a)(\partial_j^\dagger b)\\
	&=(\cL a)^\ast b+a^\ast\cL b+\sum_{j=1}^{d}(\partial_j^\dagger a^\ast)(\partial_j b)+(\partial_j a^\ast)(\partial_j^\dagger b)\\
	&=(\cL a)^\ast b+a^\ast\cL b-\sum_{j=1}^{d}\left((\partial_j a)^\ast(\partial_j b)+(\partial_j^\dagger a)^\ast(\partial_j^\dagger b)\right)\\
	&=(\cL a)^\ast b+a^\ast\cL b-2\sum_{j=1}^{d}(\partial_j a)^\ast(\partial_j b).
	\end{align*}
	So by definition, the carré du champ operator is given by:
	\begin{equation}\label{eq:gamma interms of partial}
	\Gamma(a,b)
	=\frac{1}{2}\left(a^\ast \cL b+(\cL a)^\ast b-\cL(a^\ast b) \right)
	=\sum_{j=1}^{d}(\partial_j a)^\ast \partial_j b.
	\end{equation}
	The above computations yield
	\begin{align*}
	\Gamma(a,\mathcal{L}(a))+\Gamma(\mathcal{L}(a),a)
	=\sum_{j=1}^{d}(\partial_j \cL a)^\ast \partial_j a+(\partial_j a)^\ast \partial_j \cL a
	=2\Re \sum_{j=1}^{d}(\partial_j \cL a)^\ast \partial_j a,
	\end{align*}
	and
	\begin{align*}
	\cL\Gamma(a)
	&=\sum_{j=1}^{d}\cL\left((\partial_j a)^\ast \partial_j a\right)\\
	&=\sum_{j=1}^{d}\left((\cL\partial_j a)^\ast \partial_j a+(\partial_j a)^\ast \cL \partial_j a-2\sum_{k=1}^{d}(\partial_k\partial_j a)^\ast \partial_k\partial_j a\right)\\
	&=2\Re\sum_{j=1}^{d}(\cL\partial_j a)^\ast \partial_j a
	-2\sum_{j,k=1}^{d}|\partial_k\partial_j a|^2.
	\end{align*}
	Thus
	\begin{align*}
	\Gamma_2(a)	=&\frac{1}{2}\left(\Gamma(a,\mathcal{L}(a))+\Gamma(\mathcal{L}(a),a)-\mathcal{L}\Gamma(a)\right)\\
	=&\Re \sum_{j=1}^{d}(\partial_j \cL a)^\ast \partial_j a-\Re \sum_{j=1}^{d}(\cL\partial_j a)^\ast \partial_j a+\sum_{j,k=1}^{d}|\partial_k\partial_j a|^2\\
	=&\Re \sum_{j=1}^{d}(\partial_j \cL a-\cL\partial_j a)^\ast \partial_j a+\sum_{j,k=1}^{d}|\partial_k^\dagger\partial_j a|^2,
	\end{align*}
	where in the last equality we used again the fact that $\{\partial_j\}=\{\partial_j^\dagger\}$. This proves \eqref{eq:gamma2 bochner}. 
	If $(P_t)$ satisfies the $K$-intertwining condition, then
	\begin{equation*}
	\Re\sum_{j=1}^d(\partial_j \mathcal{L}a-\mathcal{L}\partial_j a)^\ast(\partial_j a)=K\sum_{j=1}^d (\partial_j a)^\ast(\partial_j a)=K\Gamma(a).
	\end{equation*}
	Moreover, by Lemma \ref{lem:simple ineq} we get
	\begin{equation*}
	\sum_{j,k=1}^d \abs{\partial_k^\dagger \partial_j a}^2
	\geq \sum_{j=1}^d \abs{\partial_j^\dagger \partial_j a}^2
	\ge \frac 1 d \left\lvert\sum_{j=1}^d \partial_j^\dagger\partial_j a\right\rvert^2
	=\frac 1 d \abs{\mathcal{L}a}^2.
	\end{equation*}
	Therefore $(P_t)$ satisfies $\BE(K,d)$:
	\begin{equation*}
	\Gamma_2(a)=\Re \sum_{j=1}^{d}(\partial_j \cL a-\cL\partial_j a)^\ast \partial_j a+\sum_{j,k=1}^{d}|\partial_k^\dagger\partial_j a|^2
	\geq K\Gamma(a)+\frac 1 d \abs{\mathcal{L}a}^2.\qedhere
	\end{equation*}
\end{proof}

\subsection{Applications}

In this subsection we present two applications of the Bakry--Émery curvature-dimension condition, namely a Poincaré inequality and a Bonnet--Myers theorem.

It is well known that when $K>0$, the dimensionless bound $\BE(K,\infty)$ implies that the smallest non-zero eigenvalue of the generator is at least $K$. As a simple application of the dimensional variant we show that this bound can be improved.

\begin{proposition}[Poincaré inequality]
	Let $K>0$ and $N>1$. If $(P_t)$ satisfies $\BE(K,N)$ and $\lambda_1$ is the smallest non-zero eigenvalue of $\mathcal{L}$, then
	\begin{equation*}
	\lambda_1\geq \frac{KN}{N-1}.
	\end{equation*}
\end{proposition}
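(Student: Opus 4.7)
The plan is to apply the pointwise inequality from Proposition \ref{prop:char_BE}(b) to an eigenfunction of $\mathcal{L}$ associated with $\lambda_1$ and then integrate against $\tau$ to turn the $\Gamma$, $\Gamma_2$, and $|\mathcal{L}a|^2$ terms into scalars involving $\lambda_1$ and $\|a\|_2^2$. This is the standard Bakry--Émery spectral-gap argument, adapted to take advantage of the extra $\frac{1}{N}|\mathcal{L}a|^2$ term.

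First, pick an eigenvector $a \in \M$ of $\mathcal{L}$ with $\mathcal{L}a = \lambda_1 a$ (recall $\mathcal{L}$ is self-adjoint on $L_2(\M,\tau)$, so $\lambda_1 \in \mathbb{R}$); since $\lambda_1 \neq 0$, orthogonality to $\un$ gives $\tau(a) = 0$ for free, but this is not even needed. I then apply $\tau$ to the $\BE(K,N)$ inequality
\[
\Gamma_2(a) \geq K\,\Gamma(a) + \frac{1}{N}|\mathcal{L}a|^2.
\]
On the right, $\tau(\Gamma(a)) = \mathcal{E}(a,a) = \lambda_1 \|a\|_2^2$ using $\tau(\mathcal{L}(a^\ast a)) = 0$, and $\tau(|\mathcal{L}a|^2) = \lambda_1^2 \|a\|_2^2$. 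On the left, by bilinearity of $\Gamma$ and the same vanishing of $\tau \circ \mathcal{L}$,
\[
\tau(\Gamma_2(a)) = \tfrac{1}{2}\tau\bigl(\Gamma(a,\mathcal{L}a) + \Gamma(\mathcal{L}a,a)\bigr) = \lambda_1\,\tau(\Gamma(a)) = \lambda_1^2 \|a\|_2^2.
\]

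Combining these three evaluations yields
\[
\lambda_1^2\,\|a\|_2^2 \geq K\lambda_1\,\|a\|_2^2 + \frac{\lambda_1^2}{N}\,\|a\|_2^2,
\]
and dividing by $\lambda_1 \|a\|_2^2 > 0$ gives $\lambda_1(1 - 1/N) \geq K$, i.e.\ $\lambda_1 \geq KN/(N-1)$.

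There isn't really a hard step here; this is a one-line computation once one has Proposition \ref{prop:char_BE}. The only small point to be careful about is verifying the identity $\tau(\Gamma_2(a)) = \lambda_1 \,\tau(\Gamma(a))$, which relies on $\tau \circ \mathcal{L} = 0$ (from tracial symmetry and $\mathcal{L}\un = 0$) together with linearity of $\Gamma$ in each slot; everything else is bookkeeping.
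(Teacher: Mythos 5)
Your proposal is correct and follows essentially the same route as the paper: apply $\tau$ to the $\Gamma_2$-form of $\BE(K,N)$, use $\tau\circ\mathcal{L}=0$ to reduce to $\lambda_1^2\geq K\lambda_1+\lambda_1^2/N$ for an eigenvector, and divide by $\lambda_1>0$. The only cosmetic difference is that the paper first records the integrated inequality $\norm{\mathcal{L}a}_2^2\geq K\langle\mathcal{L}a,a\rangle_2+\frac{1}{N}\norm{\mathcal{L}a}_2^2$ for general $a$ and then specializes to the eigenvector, whereas you specialize first.
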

\begin{proof}
	By $\mathrm{BE}(K,N)$ we have
	\begin{align*}
	\norm{\mathcal{L} a}_2^2=\tau(\Gamma_2(a))\geq K\tau(\Gamma(a))+\frac 1 N \tau(\abs{\mathcal{L}a}^2)=K\langle \mathcal{L} a,a\rangle_2+\frac 1 N \norm{\mathcal{L}a}_2^2.
	\end{align*}
	In particular, if $\mathcal{L}a=\lambda_1 a$ and $\norm{a}_2=1$, then
	\begin{align*}
	\lambda_1^2\geq K\lambda_1+\frac 1 N \lambda_1^2,
	\end{align*}
	from which the desired inequality follows.
\end{proof}

To state the Bonnet--Myers theorem, we recall the definition of the metric $d_\Gamma$ on the space of density matrices that is variously known as quantum $L^1$-Wasserstein distance, Connes distance or spectral distance. It is given by
\begin{equation*}
d_\Gamma(\rho_0,\rho_1)=\sup\{\tau(a(\rho_1-\rho_0))\mid a=a^\ast\in\M,\,\Gamma(a)\leq \un\}
\end{equation*}
for $\rho_0,\rho_1\in\states$.

\begin{proposition}\label{prop:Bonnet-Myers_BE}
Let $K,N\in (0,\infty)$. If a symmetric quantum Markov semigroup $(P_t)$ is ergodic and satisfies Bakry--Émery curvature-dimension condition $\BE(K,N)$, then
	\begin{equation*}
	d_\Gamma(\rho,\un)\le\frac\pi 2\sqrt{\frac{N}{K}}
	\end{equation*}
	for all $\rho\in\mathcal{S}(\M)$.
	
In particular,
\begin{equation*}
\sup_{\rho_0,\rho_1\in\states}d_\Gamma(\rho_0,\rho_1)\leq \pi\sqrt{\frac K N}.
\end{equation*}
\end{proposition}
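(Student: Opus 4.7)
The approach is to bound $|\tau(a(\rho - \un))|$ for each admissible $a$ in the supremum defining $d_\Gamma$ by flowing $\rho$ to the equilibrium state $\un$ along the semigroup. Fix $a = a^\ast \in \M$ with $\Gamma(a) \le \un$ and set $f(t) := \tau((P_t a)\rho)$. Under ergodicity and tracial symmetry, $\cL$ is self-adjoint on $L_2(\M,\tau)$ with $\ker\cL = \IC\un$, so $P_t a \to \tau(a)\un$ as $t \to \infty$, giving $f(\infty) = \tau(a)\tau(\rho) = \tau(a\un)$. Consequently
\begin{equation*}
\tau(a(\rho - \un)) = f(0) - f(\infty) = \int_0^\infty \tau((\cL P_t a)\rho)\,dt.
\end{equation*}

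The heart of the argument is a uniform pointwise bound on $\cL P_t a$ coming from $\BE(K,N)$. Applying Proposition~\ref{prop:char_BE}(a) with $\Gamma(a) \le \un$ and rearranging,
\begin{equation*}
\frac{1-e^{-2Kt}}{KN}|\cL P_t a|^2 \le e^{-2Kt}P_t\Gamma(a) \le e^{-2Kt}\un,
\end{equation*}
so $\|\cL P_t a\|_\infty^2 \le KN/(e^{2Kt}-1)$. Combined with the trivial inequality $|\tau(x\rho)| \le \|x\|_\infty$ for any $\rho \in \states$, this yields
\begin{equation*}
|\tau(a(\rho - \un))| \le \sqrt{KN}\int_0^\infty \frac{dt}{\sqrt{e^{2Kt}-1}}.
\end{equation*}
A routine change of variables ($u = e^{Kt}$, then $u = \sec\theta$) evaluates the integral exactly to $\pi/(2K)$, so $|\tau(a(\rho-\un))| \le \frac{\pi}{2}\sqrt{N/K}$. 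Taking the supremum over admissible $a$ proves the first inequality.

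The second inequality follows from the triangle inequality for $d_\Gamma$: since each admissible $a$ defines a linear functional $\rho \mapsto \tau(a\rho)$, $d_\Gamma$ is a pseudometric, hence $d_\Gamma(\rho_0,\rho_1) \le d_\Gamma(\rho_0,\un) + d_\Gamma(\un,\rho_1) \le \pi\sqrt{N/K}$.

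The only step that is not completely routine is justifying $P_t a \to \tau(a)\un$, which relies on identifying $\ker\cL = \IC\un$ under ergodicity. The remaining work is bookkeeping: rearrange $\BE(K,N)$ to extract the spectral bound on $\cL P_t a$, and integrate the resulting function, which behaves like $t^{-1/2}$ at zero and decays exponentially at infinity. The sharp constant $\pi/(2K)$ of the arctangent-type integral is precisely what produces the Bonnet--Myers constant $\frac{\pi}{2}\sqrt{N/K}$, in parallel with the classical comparison against a sphere of radius $\sqrt{N/K}$.
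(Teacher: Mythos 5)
Your proof is correct and essentially the same as the paper's: both rearrange the exponential form of $\BE(K,N)$ to get the uniform bound $\norm{\cL P_t a}_\infty\le\sqrt{KN}\,(e^{2Kt}-1)^{-1/2}$ and integrate it over $[0,\infty)$ to produce $\frac{\pi}{2}\sqrt{N/K}$, then conclude by the triangle inequality. The only cosmetic difference is that you invoke ergodicity on the observable side ($P_t a\to\tau(a)\un$, with a single fundamental-theorem-of-calculus identity), whereas the paper splits $\tau(a(\rho-\un))$ and lets $P_t\rho\to\un$; under tracial symmetry these are the same fact.
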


\begin{proof}
The proof follows the same line as that of \cite[Theorem 2.4]{LMP18}. The condition $\mathrm{BE}(K,N)$ implies 
\begin{align*}
\frac{1-e^{-2Kt}}{KN}(\L P_t a)^2\leq e^{-2Kt}P_t\Gamma(a),
\end{align*}
for any $a=a^\ast \in \cM$. If $\Gamma(a)\leq \un$, we have
\begin{align*}
\norm{\L P_t a}_\infty\leq \sqrt{KN}\sqrt{\frac{1}{e^{2Kt}-1}}.
\end{align*}
Thus for any $\rho\in\mathcal{S}(\M)$,
\begin{align*}
\abs{\tau((P_t a-a)\rho)}\leq \int_0^t \left\lvert\frac{d}{ds}\tau((P_s a)\rho)\right\rvert\,ds\leq\sqrt{KN}\int_0^\infty \frac 1{\sqrt{e^{2Ks}-1}}\,ds=\frac{\pi}{2}\sqrt{\frac{N}{K}}.
\end{align*}
Therefore
\begin{align*}
\tau(a(\rho-\un))=\tau((a-P_t a)\rho)+\tau(P_t a(\rho-\un))
\leq \frac{\pi}{2}\sqrt{\frac{N}{K}}+\tau(a(P_t\rho-\un)).
\end{align*}
Since $(P_t)$ is assumed to be ergodic, we have $P_t\rho\to \un$ as $t\to\infty$, and we end up with
\begin{equation*}
d_\Gamma(\rho,\un)= \sup_{\Gamma(a)\leq 1}\tau(a(\rho-\un))\leq \frac{\pi}{2}\sqrt{\frac{N}{K}}.\qedhere
\end{equation*}

\end{proof}

\subsection{\texorpdfstring{Complete BE($K,N$)}{Complete BE(K,N)}}

In many applications it is desirable to have estimates that are tensor-stable in the sense that they hold not only for $(P_t)$, but also for $(P_t\otimes \mathrm{id}_{M_n(\IC)})$ with a constant independent of $n\in\IN$. Even in the case $K=\infty$, it seems to be unknown if this is true for the Bakry--Émery estimate. For that reason we introduce the complete Bakry--Émery estimate $\CBE(K,N)$, which has this tensor stability by definition. We will show that this stronger estimate also holds for quantum Markov semigroup satisfying the $K$-intertwining condition, and moreover, this estimate behaves as expected under arbitrary tensor products.

\begin{definition}
Let $K\in\IR$ and $N> 0$. We say that the quantum Markov semigroup $(P_t)$ satisfies $\CBE(K,N)$ if
	\begin{align*}
	[\Gamma(P_t x_j,P_t x_k)]_{j,k}\leq e^{-2Kt}[P_t \Gamma(x_j,x_k)]_{j,k}-\frac{1-e^{-2Kt}}{KN} [(\L P_t x_j)^\ast (\L P_t x_k)]_{j,k},
	\end{align*}
	for all $x_1,\dots,x_n\in \cM$ and $t> 0$.
\end{definition}

Just as in Proposition \ref{prop:char_BE} one can show that $\CBE(K,N)$ is equivalent to
\begin{equation*}
[\Gamma_2(x_j,x_k)]_{j,k}\geq K[\Gamma(x_j,x_k)]_{j,k}+\frac 1 N [(\L x_j)^\ast (\L x_k)]_{j,k}
\end{equation*}
for all $x_1,\dots,x_n\in \M$ and $t\geq 0$.

For $N=\infty$, this criterion was introduced in \cite{JZ15a} for group von Neumann algebras under the name \emph{algebraic $\Gamma_2$-condition}.

To show that $\CBE(K,N)$ for $(P_t)$ is equivalent to $\BE(K,N)$ for $(P_t\otimes\id_{M_n(\IC)})$ with constants independent of $n$, we need the following elementary lemma.

\begin{lemma}\label{lem:positive}
	Let $\mathcal{A},\mathcal{B}$ be two C*-algebras. If $x=[x_{jk}]\in M_n(\mathcal{A})$, $y=[y_{jk}]\in M_n(\mathcal{B})$ are positive, then
	\begin{equation*}
	\sum_{j,k}x_{jk}\otimes y_{jk}\geq 0.
	\end{equation*}
\end{lemma}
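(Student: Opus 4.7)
The plan is to reduce the claim to the standard factorization of positive elements in $M_n$ over a C*-algebra, which is the natural C*-algebraic analogue of the Schur product theorem.

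First I would use the fact that a positive element in $M_n(\mathcal{A})$ is of the form $c^\ast c$ for some $c = [c_{ij}] \in M_n(\mathcal{A})$; this follows by representing $\mathcal{A}$ faithfully on a Hilbert space and taking the positive square root in a larger matrix algebra, or directly from the GNS construction. Writing $x = c^\ast c$ and $y = d^\ast d$ gives
\begin{equation*}
x_{jk} = \sum_i c_{ij}^\ast c_{ik}, \qquad y_{jk} = \sum_l d_{lj}^\ast d_{lk}.
\end{equation*}

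Next I would substitute these expressions into $\sum_{j,k} x_{jk} \otimes y_{jk}$ and interchange the order of summation, writing
\begin{equation*}
\sum_{j,k} x_{jk}\otimes y_{jk}
= \sum_{i,l}\sum_{j,k}\bigl(c_{ij}^\ast \otimes d_{lj}^\ast\bigr)\bigl(c_{ik}\otimes d_{lk}\bigr)
= \sum_{i,l}\Bigl(\sum_j c_{ij}\otimes d_{lj}\Bigr)^{\!\ast}\Bigl(\sum_k c_{ik}\otimes d_{lk}\Bigr).
\end{equation*}
Each summand is of the form $z_{il}^\ast z_{il}$ with $z_{il} := \sum_j c_{ij} \otimes d_{lj} \in \mathcal{A}\otimes \mathcal{B}$, hence positive, and a sum of positive elements in a C*-algebra is positive, which proves the claim.

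The only subtlety I anticipate is a choice of tensor product (minimal, maximal, or algebraic): since $\mathcal{A}\otimes\mathcal{B}$ embeds into any C*-completion and positivity is preserved by such embeddings, any reasonable choice works; in the intended application the algebras are finite-dimensional so this issue is vacuous. Apart from this, the argument is essentially a direct computation and should not present a real obstacle.
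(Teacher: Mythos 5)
Your proof is correct and follows essentially the same route as the paper: factor $x=c^\ast c$ and $y=d^\ast d$, substitute, and regroup the sum as $\sum_{i,l}\abs{\sum_j c_{ij}\otimes d_{lj}}^2\geq 0$. The remark on the choice of tensor product is a fine (and, as you note, vacuous here) extra precaution.
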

\begin{proof}
	By assumption there are $a=[a_{jk}]\in M_n(\mathcal{A})$, $b=[b_{jk}]\in M_n(\mathcal{B})$ such that
	\begin{align*}
	x_{jk}&=\sum_{l}a_{lj}^\ast a_{lk},\\
	y_{jk}&=\sum_{m}b_{mj}^\ast b_{mk}.
	\end{align*}
	Thus
	\begin{align*}
	\sum_{j,k}x_{jk}\otimes y_{jk}&=\sum_{j,k,l,m}a_{lj}^\ast a_{lk}\otimes b_{mj}^\ast b_{mk}\\
	&=\sum_{l,m}\left(\sum_j a_{lj}^\ast \otimes b_{mj}^\ast\right)\left(\sum_k a_{lk}\otimes b_{mk}\right)\\
	&=\sum_{l,m}\left\lvert \sum_j a_{lj}\otimes b_{mj}\right\rvert^2\\
	&\geq 0.\qedhere
	\end{align*}
\end{proof}

\begin{proposition}\label{prop:char_CBE}
	Let $(P_t)$ be a quantum Markov semigroup on $\M$. For $K\in \IR$ and $N\in(0,\infty]$, the following assertions are equivalent:
	\begin{enumerate}[(a)]
		\item $(P_t)$ satisfies $\CBE(K,N)$.
		\item $(P_t\otimes\id_{M_n(\IC)})$ satisfies $\BE(K,N)$ for all $n\in\IN$.
	\end{enumerate}
\end{proposition}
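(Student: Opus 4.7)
The plan is to realize $\BE(K,N)$ for $(P_t \otimes \id_{M_n(\IC)})$ and $\CBE(K,N)$ for $(P_t)$ as two presentations of the same block-matrix inequality, with (b) being the ``more general'' form and (a) its restriction to rank-one blocks. First I would observe that the generator of $(P_t \otimes \id_{M_n(\IC)})$ is $\L \otimes \id$; then a direct Leibniz-rule expansion applied to a block element $X = [x_{jk}]_{j,k=1}^n \in \M \otimes M_n(\IC)$ yields
\begin{equation*}
\Gamma^{\L \otimes \id}(X) = \left[\sum_{l=1}^n \Gamma(x_{lj}, x_{lk})\right]_{j,k},
\end{equation*}
and completely analogous block-sum formulas hold for $(P_t\otimes\id)\Gamma^{\L\otimes\id}(X)$ and for $\lvert(\L\otimes\id)(P_t\otimes\id)X\rvert^2$. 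Consequently, $\BE(K,N)$ for $(P_t\otimes\id_{M_n})$ applied to $X$ is equivalent to
\begin{equation*}
\sum_{l=1}^n \Big([\Gamma(P_t x_{lj}, P_t x_{lk})]_{j,k} - e^{-2Kt}[P_t\Gamma(x_{lj},x_{lk})]_{j,k} + \tfrac{1-e^{-2Kt}}{KN}[(\L P_t x_{lj})^\ast (\L P_t x_{lk})]_{j,k}\Big) \le 0,
\end{equation*}
that is, a sum over the row index $l$ of $n\times n$ matrix expressions of exactly the shape appearing in $\CBE(K,N)$.

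With this identification, (a) $\Rightarrow$ (b) is immediate: apply $\CBE(K,N)$ to each tuple $(x_{l1},\ldots,x_{ln})$ for fixed $l$ to obtain $n$ negative semidefinite matrix inequalities in $M_n(\M)$, and sum them over $l$. Conversely, for (b) $\Rightarrow$ (a), given $x_1,\ldots,x_n \in \M$ I would specialize $\BE(K,N)$ for $(P_t \otimes \id_{M_n})$ to the rank-one block matrix $X = \sum_{j=1}^n x_j \otimes e_{1j}$, so that $x_{lj} = \delta_{l,1}\,x_j$; only the $l=1$ term survives in the displayed sum, and one is left with exactly the $\CBE(K,N)$ inequality on $(x_1,\ldots,x_n)$.

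The main difficulty is purely bookkeeping: one must carefully carry out the block-by-block expansion of $\Gamma^{\L\otimes\id}$ and verify that each of the four quantities entering the BE inequality genuinely splits as the asserted sum over $l$ of the four quantities entering CBE. Once this is established the equivalence is essentially automatic, and no further tool (in particular, no appeal to Lemma \ref{lem:positive}) is needed.
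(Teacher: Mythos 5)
Your proof is correct, and for the direction (b)$\Rightarrow$(a) it coincides with the paper's argument: both specialize to the block matrix $\sum_j x_j\otimes\ket{1}\bra{j}$ (your $x_{lj}=\delta_{l1}x_j$) and read off the $\CBE(K,N)$ inequality under the identification $\M\otimes M_n(\IC)\cong M_n(\M)$. For (a)$\Rightarrow$(b) you take a genuinely different route. The paper expands a general element as $\sum_j x_j\otimes y_j$ with arbitrary $y_j\in M_n(\IC)$, writes the resulting expression as $\sum_{j,k}A_{jk}\otimes y_j^\ast y_k$ with $[A_{jk}]_{j,k}\geq 0$ supplied by $\CBE$ in its $\Gamma_2$-formulation, and then invokes Lemma \ref{lem:positive} to conclude positivity. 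You instead use the canonical matrix-unit decomposition $X=\sum_{l,j}x_{lj}\otimes e_{lj}$, under which $e_{lj}^\ast e_{mk}=\delta_{lm}e_{jk}$ kills all cross terms between rows, so the amplified $\BE$ inequality becomes literally the sum over $l$ of the $\CBE$ inequalities applied to the rows $(x_{l1},\dots,x_{ln})$; summing finitely many operator inequalities in $M_n(\M)$ then finishes the proof. Your block formulas for $\Gamma^{\L\otimes\id}$, $(P_t\otimes\id)\Gamma^{\L\otimes\id}$ and $\lvert(\L\otimes\id)(P_t\otimes\id)X\rvert^2$ are correct, so the argument is complete, and it is somewhat more elementary and self-contained: it needs neither Lemma \ref{lem:positive} nor the passage to the $\Gamma_2$-form of the estimates, working directly with the integrated (semigroup) inequalities defining $\BE$ and $\CBE$. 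What the paper's route buys is reusability: the identities for arbitrary decompositions $\sum_j x_j\otimes y_j$ together with Lemma \ref{lem:positive} are exactly the tools recycled later for the tensorization of $\CBE$ (for instance the positivity of $\Gamma^P\otimes\Gamma^Q$), where a matrix-unit bookkeeping adapted to one factor would be less natural.
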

\begin{proof}
	(a)$\implies$(b): Write $\Gamma,\Gamma_2$ for the (iterated) carré du champ associated with $(P_t)$ and $\Gamma^\otimes,\Gamma_2^\otimes$ for the same forms associated with $(P_t\otimes\id_{M_n(\IC)})$.
	
	A direct computation shows
	\begin{align*}
	\Gamma_2^\otimes\left(\sum_j x_j\otimes y_j\right)&=\sum_{j,k}\Gamma_2(x_j,x_k)\otimes y_j^\ast y_k,\\
	\Gamma^\otimes \left(\sum_j x_j\otimes y_j\right)&=\sum_{j,k}\Gamma(x_j,x_k)\otimes y_j^\ast y_k,\\
	\left\lvert(\L\otimes\id_\N)\left(\sum_j x_j\otimes y_j\right)\right\rvert^2&=\sum_{j,k}(\L x_j)^\ast (\L x_k)\otimes y_j^\ast y_k.
	\end{align*}
	Hence
	\begin{align*}
	&\quad\;\Gamma_2^\otimes\left(\sum_j x_j\otimes y_j\right)-K\Gamma^\otimes(\sum_j x_j\otimes y_j)-\frac 1 N \left\lvert(\L\otimes\id_\N)\left(\sum_j x_j\otimes y_j\right)\right\rvert^2\\
	&=\sum_{j,k}(\Gamma_2(x_j,x_k)-K\Gamma(x_j,x_k)-\frac 1 N(\L x_j)^\ast (\L x_k))\otimes y_j^\ast y_k,
	\end{align*}
	and the result follows from Lemma \ref{lem:positive} and (a).
	
	(b)$\implies$(a): Let $x=\sum_j x_j\otimes \ket{1}\bra{j}$. The computations from (a)$\implies$(b) show
	\begin{equation*}
	\Gamma_2^\otimes(x)=\sum_{j,k}\Gamma_2(x_j,x_k)\otimes \ket{j}\bra{k}
	\end{equation*}
	and similar formulas for $\Gamma^\otimes$ and $\L\otimes\id_{M_n(\IC)}$. Using the $\ast$-isomorphism $\M\otimes M_n(\IC)\to M_n(\M),\,\sum_{j,k}x_{jk}\otimes\ket{j}\bra{k}\mapsto [x_{jk}]_{j,k}$, assertion (a) follows.
\end{proof}

In the following two results we will give two classes of examples for which the condition $\CBE$ is satisfied.

\begin{proposition}\label{prop:intertwining implies CBE}
Suppose that the generator $\cL$ of the quantum Markov semigroup $(P_t)$ admits the Lindblad form \eqref{eq:lindblad} with $d$ partial derivatives $\partial_1,\dots,\partial_d$. If $(P_t)$ satisfies the $K$-intertwining condition for $K\in\mathbb{R}$, then $(P_t)$ satisfies $\CBE(K,d)$.
\end{proposition}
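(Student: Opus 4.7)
The plan is to reduce $\CBE(K,d)$ for $(P_t)$ to the already-proven scalar estimate $\BE(K,d)$ for all amplified semigroups, using Proposition \ref{prop:char_CBE}, which identifies $\CBE(K,N)$ with $\BE(K,N)$ for every $(P_t\otimes\id_{M_n(\IC)})$, $n\in\IN$. So the task becomes: verify that each amplified semigroup satisfies the hypotheses of Proposition \ref{prop:BE_intertwining} with the same parameters $K$ and $d$.

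Fix $n\in\IN$. First I would check that $(P_t\otimes\id_{M_n(\IC)})$ on $\cM\otimes M_n(\IC)$ is tracially symmetric with respect to $\tau\otimes\tr_{M_n(\IC)}$ (this is inherited from $(P_t)$), and that its generator admits the Lindblad form
$$\cL\otimes\id_{M_n(\IC)}=\sum_{j=1}^d \tilde\partial_j^\dagger\tilde\partial_j,\qquad \tilde\partial_j:=\partial_j\otimes\id_{M_n(\IC)}=[v_j\otimes\un,\,\cdot\,],$$
where the adjoint is taken with respect to the tensor inner product. The family $\{v_j\otimes\un\}$ is closed under adjoints because $\{v_j\}=\{v_j^\ast\}$ is, and the number of partial derivatives remains exactly $d$.

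Next I would transport the $K$-intertwining condition to the amplification, which is immediate:
$$\tilde\partial_j\,(P_t\otimes\id_{M_n(\IC)})=(\partial_j P_t)\otimes\id_{M_n(\IC)}=e^{-Kt}(P_t\partial_j)\otimes\id_{M_n(\IC)}=e^{-Kt}(P_t\otimes\id_{M_n(\IC)})\,\tilde\partial_j.$$
Proposition \ref{prop:BE_intertwining} then yields $\BE(K,d)$ for $(P_t\otimes\id_{M_n(\IC)})$, with constants independent of $n$. Since $n$ is arbitrary, Proposition \ref{prop:char_CBE} upgrades this to $\CBE(K,d)$ for $(P_t)$.

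There is essentially no analytic obstacle: the argument is a purely structural amplification. The only point that deserves a careful check is that the Lindblad decomposition is preserved under tensoring with $\id_{M_n(\IC)}$, including the identity $(\partial_j\otimes\id_{M_n(\IC)})^\dagger=\partial_j^\dagger\otimes\id_{M_n(\IC)}$, which follows from a direct computation with the tensor trace. Alternatively, one could mimic the proof of Proposition \ref{prop:BE_intertwining} directly at the matrix level by upgrading Lemma \ref{lem:simple ineq} to row vectors $\mathbf A_j=[A_{j,l}]_l$ over $\cM$, via the identity $\sum_{j,k}(\mathbf A_j-\mathbf A_k)^\ast(\mathbf A_j-\mathbf A_k)\ge 0$; but the tensorization route bypasses this calculation entirely.
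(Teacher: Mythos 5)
Your argument is correct and is essentially the paper's own proof: amplify the Lindblad form and the $K$-intertwining condition to $\partial_j\otimes\id_{M_n(\IC)}$, apply Proposition \ref{prop:BE_intertwining} to each $(P_t\otimes\id_{M_n(\IC)})$, and conclude via Proposition \ref{prop:char_CBE}. The extra details you supply (adjoint under the tensor trace, closure of $\{v_j\otimes\un\}$ under adjoints) are exactly the "direct computation" the paper leaves implicit.
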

\begin{proof}
A direct computation shows that $\L\otimes\mathrm{id}_{M_n(\IC)}$ admits a Lindblad form with derivations $\partial_1\otimes \mathrm{id}_{M_n(\IC)},\dots,\partial_d\otimes\mathrm{id}_{M_n(\IC)}$. Now the claim is a direct consequence of Propositions \ref{prop:BE_intertwining} and \ref{prop:char_CBE}.
\end{proof}

\begin{proposition}
	If $\M$ is commutative and $(P_t)$ satisfies $\BE(K,N)$, then it also satisfies $\CBE(K,N)$.
\end{proposition}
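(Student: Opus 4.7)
The plan is to use the equivalent $\Gamma_2$-form of $\CBE(K,N)$ already recorded after the definition, namely that $(P_t)$ satisfies $\CBE(K,N)$ if and only if
\begin{equation*}
[\Gamma_2(x_j,x_k)]_{j,k}\geq K[\Gamma(x_j,x_k)]_{j,k}+\tfrac{1}{N}[(\L x_j)^\ast(\L x_k)]_{j,k}
\end{equation*}
holds in $M_n(\M)$ for every finite family $x_1,\dots,x_n\in\M$. The strategy is to reduce this matrix inequality, via commutativity of $\M$, to the scalar $\BE(K,N)$ hypothesis applied to complex linear combinations of the $x_j$.

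The key auxiliary fact I would first record is the following commutative-case characterization: for $\M$ commutative and finite-dimensional, a matrix $[A_{jk}]\in M_n(\M)$ is positive if and only if $\sum_{j,k}\bar c_j c_k A_{jk}\geq 0$ in $\M$ for every $c=(c_j)\in\IC^n$. This is immediate from decomposing $\M\cong\IC^d$ into minimal central projections, which identifies $M_n(\M)$ with $\bigoplus_{i=1}^d M_n(\IC)$; positivity in each $M_n(\IC)$ summand is equivalent to non-negativity of the associated scalar quadratic forms against $c\in\IC^n$, and positivity in $\M$ is equivalent to pointwise non-negativity, so the two characterizations match.

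Next, given $c\in\IC^n$, I set $a_c:=\sum_j c_j x_j$ and exploit the fact that $\Gamma$ and $\Gamma_2$ are sesquilinear (conjugate-linear in the first slot, linear in the second) and that $\L$ is linear. Expanding gives
\begin{equation*}
\Gamma_2(a_c)-K\Gamma(a_c)-\tfrac{1}{N}\abs{\L a_c}^2=\sum_{j,k}\bar c_j c_k\Bigl(\Gamma_2(x_j,x_k)-K\Gamma(x_j,x_k)-\tfrac{1}{N}(\L x_j)^\ast(\L x_k)\Bigr),
\end{equation*}
where I use $\abs{\L a_c}^2=(\L a_c)^\ast(\L a_c)=\sum_{j,k}\bar c_j c_k(\L x_j)^\ast(\L x_k)$ (no commutativity is actually needed for this particular expansion). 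By $\BE(K,N)$ applied to $a_c\in\M$, the left-hand side is non-negative. Since $c$ was arbitrary, the commutative positivity criterion from the previous paragraph gives the required matrix inequality, hence $\CBE(K,N)$.

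There is no substantial obstacle; the only point worth pausing on is the commutative matrix-positivity criterion, and the observation that the scalar $\BE(K,N)$ may be applied to non-self-adjoint elements $a_c$ (which is fine, since $\BE(K,N)$ is stated for all $a\in\M$). Both ingredients are standard, and together they make the implication a clean specialization of the scalar hypothesis.
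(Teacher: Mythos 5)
Your proof is correct and follows essentially the same route as the paper: commutativity lets you test positivity of the matrix $[\,\Gamma_2(x_j,x_k)-K\Gamma(x_j,x_k)-\tfrac1N(\L x_j)^\ast(\L x_k)\,]_{j,k}$ against complex scalar vectors, and then sesquilinearity reduces everything to $\BE(K,N)$ applied to $\sum_j c_j x_j$. The paper phrases this via pointwise evaluation under $\M\cong C(X)$ rather than your central-projection decomposition, but the argument is the same.
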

\begin{proof}
	By assumption, $\M\cong C(X)$ for a compact space $X$. We have to show
	\begin{align*}
	[\Gamma_2(f_j,f_k)(x)]_{j,k}\geq K[\Gamma(f_j,f_k)(x)]_{j,k}+\frac 1 N [\overline{(\L f_j)(x)} (\L f_k)(x)]_{j,k}
	\end{align*}
for $x\in X$,
	which follows from
	\begin{align*}
	\sum_{j,k}\overline{\alpha_j}\alpha_k \Gamma_2(f_j,f_k)(x)&=\Gamma_2\left(\sum_j \alpha_j f_j\right)(x)\\
	&\geq K\Gamma\left(\sum_j \alpha_j f_j\right)(x)+\frac 1 N \left\lvert \L\left(\sum_j \alpha_j f_j\right)(x)\right\rvert^2\\
	&=K\sum_{j,k}\overline{\alpha_j}\alpha_k \Gamma(f_j,f_k)(x)+\frac 1 N \sum_{j,k}\overline{\alpha_j}\alpha_k \overline{(\L f_j)(x)}(\L f_k)(x)
	\end{align*}
	for any $\alpha_j\in \IC$.
\end{proof}

Before we state the tensorization property of $\CBE$, we need another elementary inequality.
\begin{lemma}\label{lem:Young}
	Let $\mathcal{A}$ be a C*-algebra. If $a,b\in \mathcal{A}$ and $\lambda>0$, then
	\begin{equation*}
	\abs{a+b}^2\leq (1+\lambda)\abs{a}^2+(1+\lambda^{-1})\abs{b}^2.
	\end{equation*}
\end{lemma}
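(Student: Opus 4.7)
The plan is a straightforward polarization argument in the C*-algebra. First I would expand the left-hand side:
\begin{equation*}
\abs{a+b}^2 = (a+b)^\ast(a+b) = \abs{a}^2 + \abs{b}^2 + a^\ast b + b^\ast a.
\end{equation*}
So the claim reduces to the cross-term bound $a^\ast b + b^\ast a \leq \lambda\abs{a}^2 + \lambda^{-1}\abs{b}^2$, after which adding $\abs{a}^2 + \abs{b}^2$ to both sides produces exactly the desired $(1+\lambda)\abs{a}^2 + (1+\lambda^{-1})\abs{b}^2$ on the right.

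To obtain the cross-term bound, I would use the standard ``completing the square'' trick valid in any C*-algebra: the element $\sqrt{\lambda}\,a - \lambda^{-1/2}\,b$ has nonnegative modulus squared, i.e.
\begin{equation*}
0 \leq \bigl(\sqrt{\lambda}\,a - \lambda^{-1/2}\,b\bigr)^\ast\bigl(\sqrt{\lambda}\,a - \lambda^{-1/2}\,b\bigr) = \lambda\abs{a}^2 + \lambda^{-1}\abs{b}^2 - a^\ast b - b^\ast a.
\end{equation*}
Rearranging gives the cross-term estimate immediately.

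There is no real obstacle; the argument is a one-line completion of squares together with the identity expansion of $\abs{a+b}^2$, and it uses nothing beyond the C*-algebra structure (positivity of $x^\ast x$). The same proof works verbatim in any $\ast$-algebra in which $x^\ast x \geq 0$ makes sense.
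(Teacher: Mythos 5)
Your argument is correct and is essentially the paper's own proof: the paper also completes the square, observing that $(1+\lambda)\abs{a}^2+(1+\lambda^{-1})\abs{b}^2-\abs{a+b}^2=\abs{\lambda^{1/2}a-\lambda^{-1/2}b}^2\ge 0$, which is exactly your cross-term estimate rearranged. No issues to report.
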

\begin{proof}
	In fact, 
	\begin{align*}
	(1+\lambda)\abs{a}^2+(1+\lambda^{-1})\abs{b}^2
	&=|a+b|^2+\lambda \abs{a}^2+\lambda^{-1}\abs{b}^2-a^\ast b-b^\ast a\\
	&=|a+b|^2+\abs{\lambda^{1/2}a-\lambda^{-1/2}b}^2\\
	&\ge |a+b|^2.\qedhere
	\end{align*}
\end{proof}

\begin{proposition}
Let $\M$, $\N$ be finite-dimensional von Neumann algebras and let $(P_t)$, $(Q_t)$ be tracially symmetric quantum Markov semigroups on $\M$ and $\N$, respectively. If $(P_t)$ satisfies $\CBE(K,N)$ and $(Q_t)$ satisfies $\CBE(K',N')$, then $(P_t\otimes Q_t)$ satisfies $\CBE(\min\{K,K'\},N+N')$.
\end{proposition}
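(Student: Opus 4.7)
The plan is to reduce to a $\Gamma_2$ computation on the tensor algebra, following the standard pattern of Bakry--Émery tensorization. By Proposition \ref{prop:char_CBE}, $\CBE(\min\{K,K'\}, N+N')$ for $(P_t \otimes Q_t)$ is equivalent to $\BE(\min\{K,K'\}, N+N')$ for $(P_t \otimes Q_t) \otimes \id_{M_n(\IC)}$ for every $n \in \IN$; since $M_n(\IC)$ can be absorbed into either tensor factor, and since the $\CBE$ hypotheses on $(P_t)$ and $(Q_t)$ precisely guarantee that $P_t \otimes \id$ and $\id \otimes Q_t$ satisfy $\BE(K,N)$ and $\BE(K',N')$ on the enlarged algebra, it suffices to present the argument on $\M \otimes \N$ itself. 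Write $\L^{(1)} = \L_1 \otimes \id_\N$ and $\L^{(2)} = \id_\M \otimes \L_2$, and let $\Gamma^{(i)}, \Gamma^{(i)}_2$ denote the (iterated) carré du champ of $\L^{(i)}$. The generator of the tensor-product semigroup is $\L = \L^{(1)} + \L^{(2)}$, and bilinearity yields $\Gamma(f) = \Gamma^{(1)}(f) + \Gamma^{(2)}(f)$ together with
\begin{equation*}
\Gamma_2(f) = \Gamma_2^{(1)}(f) + \Gamma_2^{(2)}(f) + R(f),
\end{equation*}
where $R(f)$ collects the $i \neq j$ cross terms appearing in $\tfrac{1}{2}(\Gamma(f,\L f) + \Gamma(\L f,f) - \L\Gamma(f))$.

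The main obstacle is to show that the cross term $R(f)$ is non-negative. Since $\L^{(1)}$ acts only on the $\M$-factor while $\L^{(2)}$ acts only on the $\N$-factor, direct expansion on any representation $f = \sum_k a_k \otimes b_k$ gives
\begin{equation*}
R(f) = 2 \sum_{k,l} \Gamma_\M(a_k, a_l) \otimes \Gamma_\N(b_k, b_l).
\end{equation*}
By formula \eqref{eq:gamma interms of partial}, the block matrix $[\Gamma_\M(a_k, a_l)]_{k,l}$ equals the sum over $j$ of the Gram-type matrices $[(\partial_j a_k)^*(\partial_j a_l)]_{k,l}$ and is therefore positive in $M_n(\M)$; analogously $[\Gamma_\N(b_k, b_l)]_{k,l}$ is positive in $M_n(\N)$. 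Lemma \ref{lem:positive} then yields $R(f) \geq 0$.

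Adding the $\BE$ inequalities for $\L^{(1)}$ and $\L^{(2)}$ and discarding the non-negative cross term produces
\begin{equation*}
\Gamma_2(f) \geq \min\{K,K'\}\,\Gamma(f) + \tfrac{1}{N}|\L^{(1)} f|^2 + \tfrac{1}{N'}|\L^{(2)} f|^2.
\end{equation*}
Finally, Lemma \ref{lem:Young} with $\lambda = N'/N$ applied to $\L f = \L^{(1)} f + \L^{(2)} f$ gives $|\L f|^2 \leq (N+N')\bigl(\tfrac{1}{N}|\L^{(1)} f|^2 + \tfrac{1}{N'}|\L^{(2)} f|^2\bigr)$, and combining this with the previous inequality yields the desired $\BE(\min\{K,K'\}, N+N')$ condition.
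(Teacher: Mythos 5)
Your proof is correct and takes essentially the same route as the paper's: the same decomposition $\Gamma_2^{P\otimes Q}(f)=\Gamma_2^{P\otimes\id}(f)+\Gamma_2^{\id\otimes Q}(f)+2\sum_{k,l}\Gamma^P(a_k,a_l)\otimes\Gamma^Q(b_k,b_l)$, positivity of the cross term via Lemma \ref{lem:positive}, Lemma \ref{lem:Young} to merge $\tfrac1N\lvert\L^{(1)}f\rvert^2+\tfrac1{N'}\lvert\L^{(2)}f\rvert^2$ into $\tfrac1{N+N'}\lvert\L f\rvert^2$, and the complete version by absorbing an extra $M_n(\IC)$ factor. The only cosmetic difference is that you justify positivity of the Gram-type matrices $[\Gamma(a_k,a_l)]_{k,l}$ explicitly through \eqref{eq:gamma interms of partial}, which the paper leaves implicit.
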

\begin{proof}
We use superscripts for the (iterated) carré du champ to indicate the associated quantum Markov semigroup. Let $\kappa=\min\{K,K'\}$. We have
\begin{align*}
\Gamma_2^{P\otimes Q}-\kappa\Gamma^{P\otimes Q}
=(\Gamma_2^{P\otimes \mathrm{id}_\N}-\kappa\Gamma^{P\otimes \mathrm{id}_\N})+(\Gamma_2^{\mathrm{id}_\M\otimes Q}-\kappa\Gamma^{\mathrm{id}_\M\otimes Q})+2\Gamma^P\otimes\Gamma^Q,
\end{align*}
where 
\begin{equation*}
(\Gamma^P\otimes \Gamma^Q)\left(\sum_j x_j\otimes y_j\right):=\sum_{j,k}\Gamma^P(x_j,x_k)\otimes \Gamma^Q(y_j,y_k).
\end{equation*}
By $\CBE(\kappa,N)$ for $(P_t)$ and $\CBE(\kappa,N')$ for $(Q_t)$ we have
\begin{align*}
(\Gamma_2^{P\otimes \mathrm{id}_\N}-\kappa\Gamma^{P\otimes \mathrm{id}_\N})\left(\sum_j x_j\otimes y_j\right)\geq \frac 1 N\left\lvert\sum_j \L_Px_j\otimes y_j\right\rvert^2,\\
(\Gamma_2^{\mathrm{id}_\M\otimes Q}-\kappa\Gamma^{\mathrm{id}_\M\otimes Q})\left(\sum_j x_j\otimes y_j\right)\geq \frac 1 {N'}\left\lvert\sum_j x_j\otimes \L_Q y_j\right\rvert^2.
\end{align*}
Moreover, 
\begin{equation*}
(\Gamma^P\otimes \Gamma^Q)\left(\sum_j x_j\otimes y_j\right)\geq 0
\end{equation*}
by Lemma \ref{lem:positive}.

Finally,
\begin{equation*}
\frac 1 N\left\lvert\sum_j \L_Px_j\otimes y_j\right\rvert^2+\frac 1 {N'}\left\lvert\sum_j x_j\otimes \L_Q y_j\right\rvert^2\geq \frac 1{N+N'}\left\lvert\sum_j \L_P x_j\otimes y_j+x_j\otimes \L_Q y_j\right\rvert^2
\end{equation*}
by Lemma \ref{lem:Young}, which shows $\BE(\kappa,N+N')$ for $(P_t\otimes Q_t)$. To prove $\CBE(\kappa,N+N')$, we can simply apply the same argument to $(P_t\otimes\mathrm{id}_{M_n(\IC)})$ and $(Q_t\otimes\mathrm{id}_{M_n(\IC)})$ for arbitrary $n\in \IN$.
\end{proof}

\section{\texorpdfstring{The gradient estimate $\GE(K,N)$}{The gradient estimate GE(K,N)}}
\label{sec:CD GE}

\subsection{\texorpdfstring{Gradient estimate $\GE(K,N)$ and a sufficient condition}{Gradient estimate GE(K,N) and a sufficient condition}}
In \cite{CM14,CM17,Wir18,CM20}, a noncommutative analog of the $2$-Wasserstein metric was constructed on the set of quantum states. Among other things, it gives rise to a notion of (entropic) lower Ricci curvature bound via geodesic semi-convexity of the entropy. This allows to prove a number of functional inequalities under strictly positive lower Ricci curvature bound, including the modified log-Sobolev inequality that (seemingly) cannot be produced under the Bakry--Émery curvature-dimension condition BE($K,\infty$).

This entropic lower Ricci curvature bound is captured in the following gradient estimate
\begin{equation*}\label{ineq:GE(K,infty)}
\norm{\partial P_t a}_\rho^2
\le e^{-2Kt}\norm{\partial a}_{P_t\rho}^2,\tag{GE$(K,\infty)$}
\end{equation*}
or equivalently
\begin{equation}\label{ineq:CD(K,infty)}
\Re \langle \partial \cL a,\hat{\rho}\partial a\rangle
+\frac{1}{2}\left\langle \frac{d}{dt}\big|_{t=0}\widehat{P_t \rho}\partial a,\partial a\right\rangle 
\ge K \norm{\partial a}_{\rho}^2,
\end{equation}
where the notations $\hat{\rho}$ and $\|\cdot\|_{\rho}$ correspond to the logarithmic mean $\Lambda_{\text{log}}$. Recall Section \ref{sec:QMS} for more details. The fact that logarithmic mean comes into play lies in the use of chain rule
\begin{equation*}
\hat{\rho}\partial_j \log \rho=\partial_j\rho,~~1\le j\le d.
\end{equation*}

In fact, for the gradient estimate \eqref{ineq:GE(K,infty)} and its equivalent form \eqref{ineq:CD(K,infty)} one can work with any operator mean.
This not only makes the theory more flexible, but also includes the Bakry--Émery curvature-dimension condition BE($K,\infty$) as a special case. Indeed, one recovers BE($K,\infty$) by replacing the logarithmic mean in \eqref{ineq:CD(K,infty)} with the left/right trivial mean. In the next section we discuss the connection of GE($K,N$) and $(K,N)$-convexity of the (relative) entropy.

The study of \eqref{ineq:GE(K,infty)} for arbitrary operator means was started in \cite{Wir18,WZ20}. Here we continue to work within this framework and focus on the ``finite-dimensional'' version of \eqref{ineq:GE(K,infty)} or \eqref{ineq:CD(K,infty)}, which we call \emph{gradient estimate} $\GE(K,N)$.

\begin{definition}
	Let $\Lambda$ be an operator mean and $(P_t)$ be a symmetric quantum Markov semigroup whose generator takes the Lindblad form \eqref{eq:lindblad}. We say that $(P_t)$ satisfies the gradient estimate $\GE(K,N)$ for $K\in \mathbb{R}, N\in (0,\infty]$ if
	\begin{equation*}\label{ineq:GE(K,N)}
	\norm{\partial P_t a}_\rho^2
	\le e^{-2Kt}\norm{\partial a}_{P_t\rho}^2-\frac{1-e^{-2Kt}}{KN} \abs{\E( a,P_t\rho)}^2,\tag{GE$(K,N)$}
	\end{equation*}
	for any $t\ge 0$, $a\in \cM$ and $\rho\in \mathcal{S}_+(\cM)$.
\end{definition}

\begin{remark}
	It is obvious that when $N=\infty$, \eqref{ineq:GE(K,N)} becomes the gradient estimate $\GE(K,\infty)$. From the definition it is not immediately clear that if $(P_t)$ satisfies the gradient estimate $\GE(K,N)$, then it also satisfies the gradient estimate $\GE(K',N')$ whenever $K'\le K$ and $N'\ge N$. But this can be seen from the following equivalent formulation in the flavor of the $\Gamma_2$-condition.
\end{remark}

\begin{remark}
If $\rho\in \states$ is not invertible, one can apply \ref{ineq:GE(K,N)} to $\rho^\epsilon=\frac{\rho+\epsilon\un}{1+\epsilon}$ and let $\epsilon\searrow 0$ to see that \ref{ineq:GE(K,N)} still remains true.
\end{remark}

\begin{proposition}\label{prop:equiv_GE_CD}
	For any operator mean $\Lambda$ and any symmetric quantum Markov semigroup $(P_t)$, the gradient estimate \ref{ineq:GE(K,N)} holds if and only if
	\begin{equation}\label{ineq:GE(K,N) equiv}
	\Re \langle \partial \cL a,\hat{\rho}\partial a\rangle
	-\frac{1}{2}\langle dG(\rho)(\L\rho)\partial a,\partial a\rangle 
	\ge K \norm{\partial a}_{\rho}^2+\frac{1}{N}\abs{\E( a,\rho)}^2
	\end{equation}
	for any $\rho\in\mathcal{S}_+(\M)$ and any $a\in\M$. Here $dG(\rho)$ denotes the Fréchet derivative of $G(\rho):=\hat{\rho}=\Lambda(L(\rho),R(\rho))$. 
\end{proposition}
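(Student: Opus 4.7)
The plan is the standard Bakry--Émery-style argument: upgrade a pointwise (in $t$) inequality to an integrated one in one direction, and recover the infinitesimal information by differentiating at $t=0$ in the other.

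First I would prove that \ref{ineq:GE(K,N)} implies \eqref{ineq:GE(K,N) equiv}. Fix $\rho \in \mathcal{S}_+(\M)$ and $a \in \M$, and set
\begin{equation*}
\phi(t) := e^{-2Kt}\norm{\partial a}_{P_t\rho}^2 - \frac{1-e^{-2Kt}}{KN}\abs{\E(a,P_t\rho)}^2 - \norm{\partial P_t a}_{\rho}^2.
\end{equation*}
By assumption $\phi(t) \ge 0$ on $[0,\infty)$ and $\phi(0) = 0$, hence $\phi'(0) \ge 0$. Computing each term at $t = 0$ using $\frac{d}{dt}|_{t=0} \widehat{P_t\rho} = -dG(\rho)(\L\rho)$ (chain rule for the Fréchet derivative $dG$), $\frac{d}{dt}|_{t=0} P_t a = -\L a$, and the fact that the prefactor $\frac{1-e^{-2Kt}}{KN}$ vanishes at $t=0$ with derivative $2/N$, one obtains
\begin{equation*}
\phi'(0) = 2\Re\langle \partial \L a, \hat{\rho}\partial a\rangle - \langle dG(\rho)(\L\rho)\partial a, \partial a\rangle - 2K\norm{\partial a}_\rho^2 - \frac{2}{N}\abs{\E(a,\rho)}^2 \ge 0,
\end{equation*}
and dividing by $2$ yields \eqref{ineq:GE(K,N) equiv}.

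For the converse, fix $t>0$, $\rho \in \mathcal{S}_+(\M)$, $a \in \M$, and introduce the flow variable
\begin{equation*}
\varphi(s) := e^{-2Ks}\norm{\partial P_{t-s}a}_{P_s\rho}^2, \qquad s \in [0,t],
\end{equation*}
so that $\varphi(0) = \norm{\partial P_t a}_\rho^2$ and $\varphi(t) = e^{-2Kt}\norm{\partial a}_{P_t\rho}^2$. Writing $b_s := P_{t-s}a$ and $\rho_s := P_s\rho$, so $\frac{d}{ds}b_s = \L b_s$ and $\frac{d}{ds}\rho_s = -\L\rho_s$, a direct differentiation gives
\begin{equation*}
\varphi'(s) = 2 e^{-2Ks}\Bigl[\Re\langle \partial\L b_s, \widehat{\rho_s}\partial b_s\rangle - \tfrac{1}{2}\langle dG(\rho_s)(\L\rho_s)\partial b_s, \partial b_s\rangle - K\norm{\partial b_s}_{\rho_s}^2\Bigr].
\end{equation*}
Applying \eqref{ineq:GE(K,N) equiv} with $(b_s,\rho_s)$ in place of $(a,\rho)$ bounds the bracket from below by $\frac{1}{N}\abs{\E(b_s,\rho_s)}^2$. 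The crucial observation is the semigroup-symmetry identity
\begin{equation*}
\E(P_{t-s}a, P_s\rho) = \langle P_{t-s}a, \L P_s\rho\rangle = \langle a, \L P_t\rho\rangle = \E(a, P_t\rho),
\end{equation*}
so $\varphi'(s) \ge \frac{2e^{-2Ks}}{N}\abs{\E(a,P_t\rho)}^2$, which is independent of $s$. Integrating from $0$ to $t$ and rearranging produces exactly \ref{ineq:GE(K,N)}.

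The main technical point, rather than an obstacle, is the correct identification of $\frac{d}{ds}\widehat{\rho_s}$ as $-dG(\rho_s)(\L\rho_s)$ and keeping track of the signs introduced by $\frac{d}{ds}P_{t-s} = \L P_{t-s}$ versus $\frac{d}{ds}P_s = -\L P_s$; once these conventions are set, everything reduces to the Bakry--Émery integration trick. Invertibility of $P_s\rho$ for all $s \in [0,t]$ (needed to evaluate $\widehat{P_s\rho}^{-1}$ implicitly through $\norm{\cdot}_{P_s\rho}$) can be ensured by a standard $\rho \rightsquigarrow \rho_\epsilon = (\rho+\epsilon\un)/(1+\epsilon)$ approximation and passing $\epsilon \searrow 0$, noting that tracial symmetry preserves invertibility along the flow.
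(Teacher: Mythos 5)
Your proposal is correct and follows essentially the same route as the paper's proof: differentiating $\phi$ at $t=0$ for one direction, and for the converse differentiating $\varphi(s)=e^{-2Ks}\norm{\partial P_{t-s}a}_{P_s\rho}^2$, invoking the inequality along the flow together with the identity $\E(P_{t-s}a,P_s\rho)=\E(a,P_t\rho)$, and integrating. (Your closing approximation remark is unnecessary here, since $\rho\in\mathcal{S}_+(\M)$ and unitality gives $P_s\rho\geq\epsilon\un$ automatically, but it does no harm.)
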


\begin{proof}
	Assume that $(P_t)$ satisfies \ref{ineq:GE(K,N)}. Set 
	\begin{equation*}
	\phi(t):=e^{-2Kt}\norm{\partial a}_{P_t\rho}^2-\norm{\partial P_t a}_\rho^2-\frac{1-e^{-2Kt}}{KN}\abs{\E(a,P_t \rho)}^2.
	\end{equation*}
	Then $\phi(t)\ge 0$ and $\phi(0)=0$. Therefore $\phi'(0)\ge 0$, that is, 
	\begin{equation*}
	\left\langle \frac{d}{dt}\big|_{t=0}\widehat{P_t \rho}\partial a,\partial a\right\rangle
	-2K\norm{\partial a}_{\rho}^2
	+\langle \hat{\rho}\partial \cL a,\partial a\rangle
	+\langle \hat{\rho}\partial a,\partial \cL a\rangle
	-\frac{2}{N}\abs{\E( a,\rho)}^2\ge 0.
	\end{equation*}
	This is nothing but \eqref{ineq:GE(K,N) equiv}, since $dG(\rho)(\L\rho)=-\frac{d}{dt}\big|_{t=0}\widehat{P_t \rho}$.
	
	Now suppose that $(P_t)$ satisfies \eqref{ineq:GE(K,N) equiv}. Fix $t> 0$ and put 
	\begin{equation*}
	\varphi(s):=e^{-2Ks}\norm{\partial P_{t-s} a}_{P_{s}\rho}^2,~~0\le s\le t.
	\end{equation*}
	Then applying \eqref{ineq:GE(K,N) equiv} to $(\rho,a)=(P_s\rho,P_{t-s}a)$, we get
	\begin{align*}
	\varphi'(s)=&
	e^{-2Ks}\left(\langle \widehat{P_s\rho}\partial \cL P_{t-s}a,\partial P_{t-s}a\rangle
	+\langle \widehat{P_s\rho}\partial P_{t-s} a,\partial \cL P_{t-s} a\rangle\right.\\
	&\left.-\left\langle dG(P_s\rho)(\L\rho)\partial P_{t-s}a,\partial P_{t-s} a\right\rangle-2K\norm{\partial P_{t-s} a}_{P_{s}\rho}^2\right)\\
	\ge &\frac{2}{N}e^{-2Ks}\abs{\E(P_{t-s} a,P_s\rho)}^2\\
	=&\frac{2}{N}e^{-2Ks}\abs{\E(a,P_{t}\rho)}^2.
	\end{align*}
	This, together with the fundamental theorem of calculus, yields
	\begin{align*}
	e^{-2Kt}\norm{\partial a}_{P_t\rho}^2-	\norm{\partial P_t a}_\rho^2
	=\varphi(t)-\varphi(0)
	=\int_{0}^{t}\varphi'(s)ds
	\ge \frac{1-e^{-2Kt}}{KN}\abs{\E(a,P_t\rho)}^2.
	\end{align*}
	Therefore $(P_t)$ satisfies \ref{ineq:GE(K,N)}.
\end{proof}

\begin{remark}\label{rk:implicit constant_GE}
	Similar to Remark \ref{rk:implicit constant_BE}, the function
	$$t\mapsto \frac{1-e^{-2Kt}}{KN},$$ 
	in \ref{ineq:GE(K,N)} can be replaced by any $f$ such that $f(0)=0$ and $f'(0)=2/N$.
\end{remark}

\begin{remark}
	In the case $N=\infty$, the gradient estimate $\GE(K,\infty)$ for the left trivial mean is equivalent to the exponential form of $\BE(K,\infty)$. For $N<\infty$ this seems to be no longer the case, but one still has one implication: the Bakry--Émery curvature-dimension condition BE($K,N$) is stronger than $\GE(K,N)$ for the left trivial mean. This is a consequence of Cauchy-Schwarz inequality for the state $\tau(\rho\,\cdot\,)$:
	\begin{equation*}
	|\E(a,\rho)|^2=|\langle \cL a,\rho\rangle|^2\le \langle |\cL a|^2,\rho\rangle.
	\end{equation*}
\end{remark}

Similar to BE($K,N$), the intertwining condition is also sufficient to prove $\GE(K,N)$ with the same dimension (upper bound).

\begin{theorem}\label{thm:CD under intertwining}
	Let $(P_t)$ be a symmetric quantum Markov semigroup over $\cM$ with the Lindblad form \eqref{eq:lindblad}. Suppose that $(P_t)$ satisfies $K$-intertwining condition for some $K\in \mathbb{R}$. Then for any operator mean $\Lambda$ the quantum Markov semigroup $(P_t)$ satisfies $\GE(K,d)$. 
\end{theorem}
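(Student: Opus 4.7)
The plan is to verify the equivalent differential form of $\GE(K,d)$ given in Proposition~\ref{prop:equiv_GE_CD}, exploit the intertwining condition to peel off the curvature term, and reduce the remaining dimensional part to a pointwise Cauchy--Schwarz estimate indexed by the $d$ partial derivations.

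By Proposition~\ref{prop:equiv_GE_CD}, it suffices to show, for every $\rho\in\mathcal{S}_+(\M)$ and $a\in\M$,
$$\Re\langle\partial\mathcal{L}a,\hat\rho\partial a\rangle - \tfrac{1}{2}\langle dG(\rho)(\mathcal{L}\rho)\partial a,\partial a\rangle \ge K\|\partial a\|_\rho^2 + \tfrac{1}{d}|\mathcal{E}(a,\rho)|^2.$$
The $K$-intertwining identity $\partial_j\mathcal{L} = \mathcal{L}\partial_j + K\partial_j$ yields
$\Re\langle\partial\mathcal{L}a,\hat\rho\partial a\rangle = K\|\partial a\|_\rho^2 + \Re\sum_j\langle\mathcal{L}\partial_j a,\hat\rho\partial_j a\rangle,$
which absorbs the first term on the right-hand side. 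Since $\langle dG(\rho)(\mathcal{L}\rho)\partial a,\partial a\rangle = \sum_j\langle dG(\rho)(\mathcal{L}\rho)\partial_j a,\partial_j a\rangle$ splits diagonally, the remaining task is the dimensional inequality
$$\sum_{j=1}^d\psi(\rho,\partial_j a) \ge \tfrac{1}{d}|\mathcal{E}(a,\rho)|^2,\qquad \psi(\rho,\eta) := \Re\langle\mathcal{L}\eta,\hat\rho\eta\rangle - \tfrac{1}{2}\langle dG(\rho)(\mathcal{L}\rho)\eta,\eta\rangle.$$

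Integration by parts against each derivation gives the identity $\mathcal{E}(a,\rho) = \sum_j\langle\partial_j a,\partial_j\rho\rangle$. Setting $c_j := \langle\partial_j a,\partial_j\rho\rangle$, Lemma~\ref{lem:simple ineq} applied to the scalars $c_j$ gives $\tfrac{1}{d}|\mathcal{E}(a,\rho)|^2 \le \sum_j|c_j|^2$, so it is enough to establish the pointwise bound
$$\psi(\rho,\eta) \ge |\langle\eta,\partial_j\rho\rangle|^2 \quad\text{for every } j,\ \rho\in\mathcal{S}_+(\M),\ \eta\in\M,$$
and then apply it with $\eta = \partial_j a$ and sum over $j$.

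To prove the pointwise bound, one uses the identity $\langle\eta,\partial_j\rho\rangle = -\tau(\rho\,\partial_j(\eta^\ast))$ (integration by parts against $\partial_j$) together with Cauchy--Schwarz for the state $\varphi(\,\cdot\,) = \tau(\rho\,\cdot\,)$, which via the two forms $|\varphi(Y)|^2 \le \varphi(Y^\ast Y)$ and $|\varphi(Y)|^2 \le \varphi(YY^\ast)$ yields both $|\langle\eta,\partial_j\rho\rangle|^2 \le \tau(\rho\,(\partial_{j^\ast}\eta)(\partial_{j^\ast}\eta)^\ast)$ and $|\langle\eta,\partial_j\rho\rangle|^2 \le \tau(\rho\,(\partial_{j^\ast}\eta)^\ast(\partial_{j^\ast}\eta))$. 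For the left (resp.\ right) trivial mean, a direct calculation using the Leibniz rule for $\mathcal{L}$ gives $\psi(\rho,\eta) = \sum_k\tau(\rho\,(\partial_k\eta)(\partial_k\eta)^\ast)$ (resp.\ $\sum_k\tau(\rho\,(\partial_k\eta)^\ast(\partial_k\eta))$), and dropping all but the $k = j^\ast$ summand delivers the pointwise bound. The main obstacle is extending this to a general operator mean: one would use the Kubo--Ando integral representation to write $\hat\rho$ as a superposition of elementary parallel-sum-type means, each of which turns $\psi$ into a sum of sandwich quadratic forms amenable to the same Cauchy--Schwarz argument, and integrate against the representing measure to preserve the inequality.
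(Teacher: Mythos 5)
Your reduction via Proposition \ref{prop:equiv_GE_CD} is fine, and your argument is actually complete for the two trivial means: the intertwining identity absorbs the $K$-term, the identity $\E(a,\rho)=\sum_j\langle\partial_j a,\partial_j\rho\rangle$ together with the scalar form of Lemma \ref{lem:simple ineq} reduces matters to the componentwise bound, and for $\Lambda_{\mathrm{left}}$/$\Lambda_{\mathrm{right}}$ the Leibniz-rule cancellation indeed gives $\psi(\rho,\eta)=\sum_k\tau\bigl(\rho\,(\partial_k\eta)(\partial_k\eta)^\ast\bigr)$ (resp.\ $\sum_k\tau\bigl(\rho\,(\partial_k\eta)^\ast(\partial_k\eta)\bigr)$), so Cauchy--Schwarz for $\tau(\rho\,\cdot)$ applied to $\langle\eta,\partial_j\rho\rangle=\tau\bigl(\rho(\partial_{j^\ast}\eta)^\ast\bigr)$ does the job. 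But the theorem is about an \emph{arbitrary} operator mean (the logarithmic mean being the case all of Section \ref{sec:geodesic convexity} depends on), and that case is exactly where your proof stops: the final sentence invoking the Kubo--Ando representation is a sketch, not an argument. For a weighted harmonic mean the Fr\'echet derivative $dG(\rho)(\L\rho)$ involves conjugations by $G(\rho)$ and by $\rho^{-1}$, and there is no analog of the exact Leibniz cancellation that made $\psi$ a sum of positive ``sandwich'' forms for the trivial means; it is not at all clear that your pointwise inequality $\psi(\rho,\eta)\ge\abs{\langle\eta,\partial_j\rho\rangle}^2$ (which is even stronger, componentwise, than what is needed) survives for such means, and you give no proof of it. Integrating over the representing measure is the easy part; the elementary-mean bound is the missing content.

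For comparison, the paper's proof avoids $dG$ and the differential formulation altogether. It works at the integrated level: using \eqref{eq:gamma interms of partial}, Lemma \ref{lem:simple ineq} and the intertwining condition, it establishes the two operator inequalities on $\hat\cM$,
\begin{equation*}
L(P_t\rho)\ \ge\ \hat P_t L(\rho)\hat P_t+\tfrac{1-e^{-2Kt}}{dK}\ket{\partial P_t\rho}\bra{\partial P_t\rho},\qquad
R(P_t\rho)\ \ge\ \hat P_t R(\rho)\hat P_t+\tfrac{1-e^{-2Kt}}{dK}\ket{\partial P_t\rho}\bra{\partial P_t\rho},
\end{equation*}
with the \emph{same} rank-one correction in both, and then feeds them through the axiomatic properties of operator means (monotonicity, concavity, positive homogeneity, the transformer inequality and $\Lambda(A,A)=A$ from Lemma \ref{lem:mean}) to get $\Lambda(L(P_t\rho),R(P_t\rho))\ge \hat P_t\Lambda(L(\rho),R(\rho))\hat P_t+\tfrac{1-e^{-2Kt}}{dK}\ket{\partial P_t\rho}\bra{\partial P_t\rho}$, after which intertwining and Remark \ref{rk:implicit constant_GE} finish the proof. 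That mechanism is precisely what handles every operator mean uniformly, and it is what your proposal would need to replace the unproven Kubo--Ando step; as it stands, your argument proves the theorem only for the left and right trivial means.
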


\begin{proof}
	For $a\in \cM$, recall that
	\begin{align*}
	P_t(a^\ast a)-(P_t a)^\ast P_t a
	=\int_{0}^{t}\frac{d}{ds} P_{s}\left((P_{t-s} a)^\ast P_{t-s} a\right) ds
	=2\int_{0}^{t} P_{s}\Gamma(P_{t-s} a) ds.
	\end{align*}
	Under the $K$-intertwining condition, we have (either by Kadison--Schwarz or BE($K,\infty$))
	$$P_{s}\Gamma(P_{t-s} a)\ge e^{2Ks}\Gamma(P_t a).$$
	So
	\begin{equation}\label{ineq:from BE(K,infty)}
	P_t(a^\ast a)-(P_t a)^\ast P_t a
	\ge 2\int_{0}^{t}e^{2Ks}ds\Gamma(P_t a)
	=\frac{e^{2Kt}-1}{K}\Gamma(P_t a).
	\end{equation}
	By \eqref{eq:gamma interms of partial} and Lemma \ref{lem:simple ineq}, we get for any $(x_j)_{1\le j\le d}\subset \cM$
	\begin{equation}\label{ineq:sum of gradients}
	\sum_{j=1}^{d}\Gamma(P_t x_j)
	=\sum_{j,k=1}^{d}|\partial_k P_t x_j|^2
	=\sum_{j,k=1}^{d}|\partial_k^\dagger P_t x_j|^2
	\ge\sum_{j=1}^{d}|\partial_j^\dagger P_t x_j|^2
	\ge\frac{1}{d}\left|\sum_{j=1}^{d}\partial_j^\dagger P_t x_j\right|^2.
	\end{equation}
	Let $\hat{\cM}=\oplus_{j=1}^{d}\cM$ be equipped with the inner product 
	$$\langle (x_j),(y_j)\rangle:=\sum_{j=1}^{d}\langle x_j,y_j\rangle,$$
	and $\hat P_t $ be the operator acting on $\hat{\cM}$ such that $\hat P_t(x_1,\dots,x_d)=(P_t x_1,\dots, P_t x_d)$. Fix $\rho\in\mathcal{S}_+(\cM)$. For simplicity, let us identify $\rho$ with the element $(\rho,\dots,\rho)$ in $\hat{\cM}$. Then for $x=(x_1,\dots,x_d)\in \hat{\cM}$, we have by \eqref{ineq:from BE(K,infty)} and \eqref{ineq:sum of gradients} that
	\begin{align*}
	\langle \hat P_t(x^\ast x),\rho\rangle-\langle (\hat P_t x)^\ast \hat P_t x,\rho\rangle
	=&\sum_{j=1}^{d}\langle P_t(x_j^\ast x_j)-(P_t x_j)^\ast P_t x_j,\rho\rangle \\
	\ge& \frac{e^{2Kt}-1}{dK}\sum_{j=1}^{d}\langle \Gamma (P_t x_j),\rho\rangle\\
	\ge &\frac{e^{2Kt}-1}{dK}\left\langle \left|\sum_{j=1}^{d} \partial_j^\dagger P_t x_j\right|^2,\rho\right\rangle.
	\end{align*}
	From $K$-intertwining condition and Cauchy-Schwarz inequality for the state $\tau(\rho\cdot)$ on $\cM$, this is bounded from below by
	\begin{equation*}
	\frac{1-e^{-2Kt}}{dK}\left\langle \left|\sum_{j=1}^{d} P_t\partial_j^\dagger x_j\right|^2,\rho\right\rangle
	\ge\frac{1-e^{-2Kt}}{dK}\left|\sum_{j=1}^{d}\left\langle P_t\partial_j^\dagger x_j,\rho\right\rangle\right|^2\\
	= \frac{1-e^{-2Kt}}{dK}\left|\left\langle  x,\partial P_t  \rho\right\rangle\right|^2.
	\end{equation*}
	So we have proved that for any $x\in \hat{\cM}$:
	\begin{align*}
	\langle x(\hat P_t \rho),x\rangle
	\ge \langle \hat P_t x,(\hat P_t x)\rho\rangle+\frac{1-e^{-2Kt}}{dK}\langle x,\ket{\partial P_t \rho}\bra{\partial P_t\rho}(x)\rangle,
	\end{align*}
	or equivalently
	\begin{equation*}
	R(P_t \rho)\geq \hat P_t R(\rho)\hat P_t +\frac{1-e^{-2Kt}}{dK}\ket{\partial P_t \rho}\bra{\partial P_t\rho}.
	\end{equation*}
	Replacing $x$ by $x^\ast$, we obtain
	\begin{equation*}
	L(P_t \rho)\geq \hat P_t L(\rho)\hat P_t+\frac{1-e^{-2Kt}}{dK}\ket{\partial P_t \rho}\bra{\partial P_t\rho}.
	\end{equation*}
	Note that the second summand is the same in both cases.
	
	Now since $\Lambda$ is an operator mean, we have
	\begin{align*}
	\Lambda(L(P_t\rho),R(P_t\rho)) 
	\geq &\Lambda (\hat P_t L(\rho)\hat P_t,\hat P_t R(\rho)\hat P_t)+\frac{1-e^{-2Kt}}{dK}\Lambda\left(\ket{\partial P_t \rho}\bra{\partial P_t\rho},\ket{\partial P_t \rho}\bra{\partial P_t\rho}\right)\\
	\ge &\hat P_t \Lambda(L(\rho),R(\rho))\hat P_t+\frac{1-e^{-2Kt}}{dK}\ket{\partial P_t \rho}\bra{\partial P_t\rho},
	\end{align*}
	where in the first inequality we used the monotonicity, concavity (Lemma \ref{lem:mean} (b)) and positive homogeneity (Lemma \ref{lem:mean} (a)) of $\Lambda$, and in the second inequality we used the transformer inequality and Lemma \ref{lem:mean}(d).
	This, together with the $K$-intertwining condition, yields
	\begin{align*}
	\norm{\partial P_t a}_\rho^2&=\langle \Lambda(L(\rho),R(\rho))\partial P_t a,\partial P_t a\rangle\\
	&= e^{-2Kt}\langle \hat P_t \Lambda(L(\rho),R(\rho))\hat P_t \partial a,\partial a\rangle\\
	&\leq e^{-2Kt}\langle \Lambda(L(P_t\rho),R(P_t\rho))\partial a,\partial a\rangle - \frac{e^{-2Kt}-e^{-4Kt}}{dK}\abs{\langle \partial P_t \rho,\partial a\rangle}^2\\
	&=e^{-2Kt}\norm{\partial a}_{P_t\rho}^2-\frac{e^{-2Kt}-e^{-4Kt}}{dK} \abs{\E(a,P_t\rho)}^2.
	\end{align*}
	This completes the proof, by Remark \ref{rk:implicit constant_GE}.
\end{proof}

\subsection{Bonnet--Myers theorem}\label{subsec:Bonnet-Myers}

As a first application of the dimensional gradient estimate $\GE(K,N)$, we present here a Bonnet--Myers theorem for the noncommutative analog of the Wasserstein distance introduced in \cite{CM17,CM20}. The proof is quite similar (or, in fact, similar to the dual) to the proof of Proposition \ref{prop:Bonnet-Myers_BE}.

Let us first recall the definition of the metric. The space $\mathcal{S}_+(\M)$ of invertible density matrices is a smooth manifold and the tangent space at $\rho\in \mathcal{S}_+(\M)$ can be canonically identified with the traceless self-adjoint elements of $\mathcal{M}$. Assume that $(P_t)$ is a tracially symmetric quantum Markov semigroup with generator $\L$ with Lindblad form (\ref{eq:lindblad}).

Fix an operator mean $\Lambda$. For $\rho\in\mathcal S_+(\M)$ we define
\begin{equation}\label{eq:Riemannian operator}
\K_\rho^\Lambda\colon \M\to \M,\,x\mapsto\partial^\dagger\hat{\rho}\partial x=\sum_{j=1}^d\partial_j^\dagger(\Lambda(L(\rho),R(\rho))\partial_j(x)).
\end{equation}
The Riemannian metric $g^\Lambda$ on $\mathcal{S}_+(\M)$ is defined by
\begin{equation*}
g^\Lambda_\rho(\dot \rho_1,\dot \rho_2)=\langle \dot\rho_1,(\K^\Lambda_\rho)^{-1}\dot\rho_2\rangle.
\end{equation*}
The associated distance function on $\mathcal{S}_+(\M)\times \mathcal{S}_+(\M)$ is denoted by $\W$. By \cite[Proposition 9.2]{CM20}, $\W$ can be extended to $\mathcal{S}(\M)\times \mathcal{S}(\M)$ since
\begin{equation*}
\Lambda(a\un,b\un)\geq\Lambda(\min\{a,b\}\un,\min\{a,b\}\un)=\min\{a,b\}\un
\end{equation*}
for all $a,b>0$.

\begin{proposition}
Fix an operator mean $\Lambda$. Let $K,N\in (0,\infty)$. If $(P_t)$ is ergodic and satisfies gradient estimate $\GE(K,N)$, then
	\begin{equation*}
	\W(\rho,\un)\le\frac\pi 2\sqrt{\frac{N}{K}}
	\end{equation*}
	for all $\rho\in\mathcal{S}_+(\M)$.
	
In particular,
\begin{equation*}
\sup_{\rho_0,\rho_1\in\mathcal{S}_+(\M)}\W(\rho_0,\rho_1)\leq \pi\sqrt{\frac K N}.
\end{equation*}
\end{proposition}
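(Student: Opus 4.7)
The plan is to adapt the argument of Proposition \ref{prop:Bonnet-Myers_BE}, but working ``primally'' with the Benamou--Brenier/Riemannian description of $\W$ rather than with a Kantorovich-type dual. Concretely, since $\W$ is the infimum of lengths of curves in $\mathcal S_+(\M)$, I would obtain the bound by exhibiting a specific curve---the heat flow $s\mapsto P_s\rho$---and controlling its length using $\GE(K,N)$. This is what is meant by the argument being ``dual'' to the BE proof: there one had a uniform $L^\infty$-bound on the Lipschitz-type quantity $\norm{\L P_ta}_\infty$, whereas here I will bound the dual Hilbert-space quantity $\sup\{|\E(b,\rho_s)|:\norm{\partial b}_{\rho_s}\le 1\}$, and the final time integral will be identical.

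In detail, for each $t>0$ the curve $s\mapsto \rho_s:=P_s\rho$ lies in $\mathcal S_+(\M)$, so
\begin{equation*}
\W(\rho,P_t\rho)\le L_t:=\int_0^t \sqrt{g^\Lambda_{\rho_s}(\dot\rho_s,\dot\rho_s)}\,ds.
\end{equation*}
Using $g^\Lambda_\rho(\xi,\eta)=\langle \xi,(\K^\Lambda_\rho)^{-1}\eta\rangle$ together with $g^\Lambda_\rho(\K^\Lambda_\rho b,\K^\Lambda_\rho b)=\norm{\partial b}_\rho^2$, Cauchy--Schwarz in the $g^\Lambda$-inner product gives the primal/dual identity
\begin{equation*}
\sqrt{g^\Lambda_{\rho_s}(\dot\rho_s,\dot\rho_s)}=\sup\bigl\{|\langle b,\dot\rho_s\rangle|:\norm{\partial b}_{\rho_s}\le 1\bigr\}=\sup\bigl\{|\E(b,\rho_s)|:\norm{\partial b}_{\rho_s}\le 1\bigr\},
\end{equation*}
where the last step uses $\dot\rho_s=-\L\rho_s$ and the tracial symmetry $\langle b,\L\rho_s\rangle=\langle \L b,\rho_s\rangle=\E(b,\rho_s)$.

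Now the definition of $\GE(K,N)$, after dropping the non-negative term $\norm{\partial P_s b}_\rho^2$, yields
\begin{equation*}
|\E(b,P_s\rho)|^2\le\frac{KN\,e^{-2Ks}}{1-e^{-2Ks}}\,\norm{\partial b}_{P_s\rho}^2,
\end{equation*}
so that the speed obeys $\sqrt{g^\Lambda_{\rho_s}(\dot\rho_s,\dot\rho_s)}\le \sqrt{KN}\,e^{-Ks}/\sqrt{1-e^{-2Ks}}$. Integrating and substituting $u=e^{-Ks}$,
\begin{equation*}
L_t\le \sqrt{KN}\int_0^\infty\frac{e^{-Ks}}{\sqrt{1-e^{-2Ks}}}\,ds=\frac{\sqrt{KN}}{K}\int_0^1\frac{du}{\sqrt{1-u^2}}=\frac{\pi}{2}\sqrt{\frac{N}{K}}.
\end{equation*}
By ergodicity $P_t\rho\to\un$ as $t\to\infty$, and the continuity of $\W$ on $\mathcal S(\M)\times\mathcal S(\M)$ guaranteed by the extension mentioned after \eqref{eq:Riemannian operator} then gives $\W(\rho,\un)\le \frac{\pi}{2}\sqrt{N/K}$. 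The ``in particular'' statement follows from the triangle inequality $\W(\rho_0,\rho_1)\le \W(\rho_0,\un)+\W(\un,\rho_1)$.

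The main obstacle I expect is justifying the primal/dual identity for the Riemannian speed: one has to check that $-\L\rho_s$ lies in the range of $\K^\Lambda_{\rho_s}$ restricted to traceless self-adjoint elements, so that the supremum is attained and coincides with the length element. For invertible $\rho$ the operator $\K^\Lambda_\rho$ is an isomorphism of that subspace in the Carlen--Maas setting, and one needs to verify that this persists for an arbitrary operator mean $\Lambda$; once this is in place, the remaining steps are routine.
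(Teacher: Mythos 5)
Your proposal is correct and follows essentially the same route as the paper: the paper also bounds the speed of the heat-flow curve $t\mapsto P_t\rho$ by applying \ref{ineq:GE(K,N)} with the first term dropped, only it takes the optimizer $a=(\K^\Lambda_{\rho_t})^{-1}\dot\rho_t$ directly instead of phrasing the speed as a dual supremum, and then evaluates the identical integral $\int_0^\infty (e^{2Kt}-1)^{-1/2}\,dt$. The ``main obstacle'' you flag is resolved exactly by this choice, since invertibility of $\K^\Lambda_\rho$ on traceless self-adjoint elements (for invertible $\rho$ and ergodic $(P_t)$) is already built into the definition of the metric $g^\Lambda$.
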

\begin{proof}
Since $(P_t)$ is ergodic, we have $P_t \rho\to \un$ as $t\to\infty$. Let $\rho_t=P_t \rho$ for $t\geq 0$. The gradient estimate $\GE(K,N)$ implies
\begin{equation*}
\abs{\langle a,\dot \rho_t\rangle}=\abs{\langle a,\L\rho_t\rangle}\leq \sqrt{KN}\sqrt{\frac{e^{-2Kt}}{1-e^{-2Kt}}}\norm{\partial a}_{\rho_t}=\frac{\sqrt{KN}}{\sqrt{e^{2Kt}-1}}\norm{\partial a}_{\rho_t}
\end{equation*}
for all $a\in \M$. Choosing $a=(\K^\Lambda_{\rho_t})^{-1}\dot\rho_t$, we get
\begin{equation*}
g^{\Lambda}_{\rho_t}(\dot\rho_t,\dot \rho_t)
\leq \frac{\sqrt{KN}}{\sqrt{e^{2Kt}-1}}\sqrt{\langle\K^\Lambda_{\rho_t} a,a\rangle}
= \frac{\sqrt{KN}}{\sqrt{e^{2Kt}-1}}\sqrt{g^{\Lambda}_{\rho_t}(\dot\rho_t,\dot \rho_t)}.
\end{equation*}
Hence
\begin{equation*}
\sqrt{g^{\Lambda}_{\rho_t}(\dot\rho_t,\dot \rho_t)}\leq \frac{\sqrt{KN}}{\sqrt{e^{2Kt}-1}},
\end{equation*}
and we conclude
\begin{equation*}
\W(\rho,\un)\leq \int_0^\infty\sqrt{g^{\Lambda}_{\rho_t}(\dot\rho_t,\dot \rho_t)}\,dt\leq \int_0^\infty \frac{\sqrt{KN}}{\sqrt{e^{2Kt}-1}}\,dt=\frac \pi 2 \sqrt{\frac K N}.\qedhere
\end{equation*}
\end{proof}



\subsection{\texorpdfstring{Complete $\GE(K,N)$}{Complete GE(K,N)}}
Now we turn to the complete version of $\GE(K,N)$.

\begin{definition}
	We say that a quantum Markov semigroup $(P_t)$ satisfies complete gradient estimate $\CGE(K,N)$ for $K\in \mathbb{R}$ and $N\in (0,\infty]$ if $(P_t\otimes\id_{M_n(\IC)})$ satisfies $\GE(K,N)$ for all $n\in\IN$.
\end{definition}

Similar to Proposition \ref{prop:intertwining implies CBE}, the $K$-intertwining condition is sufficient for $\CGE\colon$

\begin{proposition}\label{prop:intertwining implies CGE}
	Suppose that the generator $\cL$ of the quantum Markov semigroup $(P_t)$ admits the Lindblad form \eqref{eq:lindblad} with $d$ partial derivatives $\partial_1,\dots,\partial_d$. If $(P_t)$ satisfies the $K$-intertwining condition for $K\in\mathbb{R}$, then $(P_t)$ satisfies $\CGE(K,d)$.
\end{proposition}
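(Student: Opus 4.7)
The strategy is entirely parallel to the argument for Proposition \ref{prop:intertwining implies CBE}: verify that both the Lindblad structure and the $K$-intertwining condition pass to the amplified semigroup $(P_t \otimes \id_{M_n(\IC)})$, and then invoke Theorem \ref{thm:CD under intertwining} directly on the amplified data. The whole point is that amplification preserves the number of partial derivatives, so the dimension bound $N=d$ should survive intact.

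Concretely, I would first check that $\L \otimes \id_{M_n(\IC)}$ is the generator of a tracially symmetric quantum Markov semigroup on $\M\otimes M_n(\IC)$ (with respect to $\tau \otimes \mathrm{tr}_{M_n(\IC)}$) that again has a Lindblad presentation of the form \eqref{eq:lindblad}. The obvious candidate derivations are $\tilde\partial_j := \partial_j \otimes \id_{M_n(\IC)}$; writing $\partial_j = [v_j,\cdot\,]$ with $v_j \in \M$, one has $\tilde\partial_j = [v_j \otimes 1,\cdot\,]$, and since $(v_j\otimes 1)^\ast = v_{j^\ast}\otimes 1$ the involutive symmetry $\{\tilde v_j\}=\{\tilde v_j^\ast\}$ required by Alicki's theorem holds. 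A short computation then gives $\L\otimes\id_{M_n(\IC)} = \sum_{j=1}^d \tilde\partial_j^\dagger \tilde\partial_j$, so the number of partial derivatives is still $d$.

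Next, the $K$-intertwining condition transfers verbatim: from $\partial_j P_t = e^{-Kt} P_t \partial_j$ one gets
\begin{equation*}
\tilde\partial_j (P_t \otimes \id_{M_n(\IC)}) = e^{-Kt}(P_t\otimes\id_{M_n(\IC)})\tilde\partial_j, \qquad 1\le j\le d,
\end{equation*}
since both sides act as the identity on the second tensor factor. Now Theorem \ref{thm:CD under intertwining}, applied to $(P_t\otimes\id_{M_n(\IC)})$ on the matrix algebra $\M\otimes M_n(\IC)$ with its $d$ derivations $\tilde\partial_1,\dots,\tilde\partial_d$ and the intertwining constant $K$, yields $\GE(K,d)$ for an arbitrary operator mean $\Lambda$. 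Since $n\in\IN$ was arbitrary, this is exactly $\CGE(K,d)$.

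I do not foresee a genuine obstacle here; the only thing to be careful about is bookkeeping in the tensor factorization, in particular ensuring that the operator mean $\Lambda$ used on $\M\otimes M_n(\IC)$ is defined via the ambient left/right multiplications on this larger algebra (which is how $\hat\rho$ is constructed in the definition of $\GE$), and that the dimension parameter does not accidentally grow to $dn$. Both are automatic because Theorem \ref{thm:CD under intertwining} is stated for an arbitrary finite-dimensional von Neumann algebra and the count $d$ of partial derivatives is unchanged by tensoring with $\id_{M_n(\IC)}$.
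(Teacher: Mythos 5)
Your argument is exactly the one the paper intends: it states the proposition without a written proof, pointing to the analogy with Proposition \ref{prop:intertwining implies CBE}, whose proof is precisely this amplification argument (the derivations $\partial_j\otimes\id_{M_n(\IC)}$ give a Lindblad form with the same $d$ and inherit the $K$-intertwining condition), followed by an application of the relevant sufficient-condition result, here Theorem \ref{thm:CD under intertwining}, for each $n$. Your proposal is correct and matches this route.
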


Also, the complete gradient estimate $\CGE$ is tensor stable.
\begin{proposition}
	Consider two quantum Markov semigroups $(P_t^j)$ acting on $\M_j$, $j=1,2$. If $(P_t^j)$ satisfies $\CGE(K_j,N_j),j=1,2$, then the tensor product $(P_t^1\otimes P_t^2)$ over $\cM=\cM_1\otimes\cM_2$ satisfies $\CGE(K,N)$ with $K=\min\{K_1,K_2\}$ and $N=N_1+ N_2$.
\end{proposition}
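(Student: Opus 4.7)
First I would reduce to the (ostensibly weaker) statement that $\CGE(K_j,N_j)$ for $(P_t^j)$, $j=1,2$, implies $\GE(K,N_1+N_2)$ for $(P_t^1\otimes P_t^2)$ with $K=\min\{K_1,K_2\}$. The full $\CGE$ claim then follows because $(P_t^1\otimes P_t^2)\otimes\id_{M_n(\IC)}$ factors (after reordering tensor factors) as a tensor product of $(P_t^1\otimes\id_{M_n(\IC)})$ and $(P_t^2)$, and $\CGE$ is preserved under amplification by a matrix algebra, so both factors still satisfy $\CGE$.

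For the reduced statement I would use the equivalent differential form of $\GE$ from Proposition~\ref{prop:equiv_GE_CD}. Setting $\L_1'=\L_1\otimes\id$, $\L_2'=\id\otimes\L_2$ and $\partial=(\partial^{(1)},\partial^{(2)})$, where $\partial^{(1)}$, resp.\ $\partial^{(2)}$, collects the partial derivatives of $\L_1'$, resp.\ $\L_2'$, the operators $\partial^{(1)}$ and $\L_2'$ commute (and similarly $\partial^{(2)}$ with $\L_1'$) because they act on complementary tensor factors. Using this together with linearity of the Fréchet derivative $dG(\rho)$ in its direction argument and $\L=\L_1'+\L_2'$, expand
\[
A:=\Re\langle\partial\L a,\hat\rho\partial a\rangle-\tfrac12\langle dG(\rho)(\L\rho)\partial a,\partial a\rangle
\]
into two diagonal pieces $D_j:=\Re\langle\partial^{(j)}\L_j'a,\hat\rho\partial^{(j)}a\rangle-\tfrac12\langle dG(\rho)(\L_j'\rho)\partial^{(j)}a,\partial^{(j)}a\rangle$ ($j=1,2$) and two cross pieces
\[
C_j:=\Re\langle\L_{3-j}'\partial^{(j)}a,\hat\rho\partial^{(j)}a\rangle-\tfrac12\langle dG(\rho)(\L_{3-j}'\rho)\partial^{(j)}a,\partial^{(j)}a\rangle.
\]
Each $D_j$ is exactly the LHS of the differential form of $\GE(K_j,N_j)$ for the amplified semigroup $(P_t^j)\otimes\id$, which holds by $\CGE(K_j,N_j)$; this gives $D_j\ge K_j\norm{\partial^{(j)}a}_\rho^2+N_j^{-1}\abs{\langle\L_j'a,\rho\rangle}^2$.

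The technical heart is to establish $C_j\ge 0$. A direct computation shows that $2C_1$ equals the derivative at $s=0$ of
\[
s\mapsto\norm{\partial^{(1)}a}_{(\id\otimes P_s^2)\rho}^2-\norm{(\id\otimes P_s^2)\partial^{(1)}a}_\rho^2,
\]
which vanishes at $s=0$; hence $C_1\ge 0$ reduces to the operator inequality
\[
\widehat{(\id\otimes P_s^2)\rho}\,\ge\,(\id\otimes P_s^2)\,\hat\rho\,(\id\otimes P_s^2)\qquad\text{on }L_2(\M,\tau),\ s\ge 0.
\]
To prove this I would first verify $L((\id\otimes P_s^2)\rho)\ge(\id\otimes P_s^2)L(\rho)(\id\otimes P_s^2)$: by tracial symmetry, the corresponding quadratic form difference evaluated on $x\in\M$ equals $\tau(\rho\cdot[(\id\otimes P_s^2)(xx^\ast)-(\id\otimes P_s^2)(x)(\id\otimes P_s^2)(x^\ast)])$, which is non-negative by the Kadison--Schwarz inequality for the unital CP map $\id\otimes P_s^2$; the analogue for $R$ is identical. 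Monotonicity of $\Lambda$ (Lemma~\ref{lem:mean}) together with the transformer inequality applied with $C=\id\otimes P_s^2$, which is a positive self-adjoint operator on $L_2(\M,\tau)$ by tracial symmetry, then yields the required inequality for $\hat\rho$. The same argument gives $C_2\ge 0$.

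Assembling everything, $A\ge K_1\norm{\partial^{(1)}a}_\rho^2+K_2\norm{\partial^{(2)}a}_\rho^2+N_1^{-1}\abs{\langle\L_1'a,\rho\rangle}^2+N_2^{-1}\abs{\langle\L_2'a,\rho\rangle}^2$. The first two terms dominate $K\norm{\partial a}_\rho^2$ since $K\le K_j$ and both weighted norms are non-negative, while the scalar Cauchy--Schwarz inequality $\frac{\abs{x}^2}{N_1}+\frac{\abs{y}^2}{N_2}\ge\frac{\abs{x+y}^2}{N_1+N_2}$ (the scalar specialization of Lemma~\ref{lem:Young} with $\lambda=N_1/N_2$) handles the remaining terms, giving the differential form of $\GE(K,N_1+N_2)$. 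The principal obstacle is the non-negativity of the cross terms: the operator inequality $\widehat{(\id\otimes P_s^2)\rho}\ge(\id\otimes P_s^2)\hat\rho(\id\otimes P_s^2)$ is not obvious and demands combining Kadison--Schwarz with monotonicity and the transformer inequality of the operator mean.
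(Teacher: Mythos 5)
Your proof is correct, but it takes a genuinely different route from the paper's. The paper works with the integrated (semigroup) form of \ref{ineq:GE(K,N)}: it splits $\norm{\partial (P^1_t\otimes P^2_t)a}_\rho^2$ into the two blocks $(\partial^1\otimes\id)$ and $(\id\otimes\partial^2)$, applies $\CGE(K_j,N_j)$ of each factor to the amplified semigroups $(P^1_t\otimes\id_{\M_2})$, $(\id_{\M_1}\otimes P^2_t)$, and then invokes \cite[Theorem 4.1]{WZ20} for the key monotonicity step
\begin{equation*}
\|(\partial^1 \otimes \id)(\id \otimes P_t^2)a\|_{(P_t^1\otimes \id)\rho}^2
	+\|(\id\otimes\partial^2)(P_t^1 \otimes \id)a\|_{(\id\otimes P_t^2)\rho}^2\le \norm{\partial a}_{P_t \rho}^2,
\end{equation*}
before finishing with the same scalar Cauchy--Schwarz inequality you use for the $1/N$-terms. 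You instead work at the infinitesimal level via Proposition \ref{prop:equiv_GE_CD}, split the left-hand side into diagonal and cross terms, and prove the non-negativity of the cross terms yourself: your identification of $2C_j$ as the derivative at $s=0$ of $s\mapsto\norm{\partial^{(1)}a}^2_{(\id\otimes P^2_s)\rho}-\norm{(\id\otimes P^2_s)\partial^{(1)}a}^2_\rho$ is correct, and your operator inequality $\widehat{(\id\otimes P^2_s)\rho}\ge(\id\otimes P^2_s)\hat\rho(\id\otimes P^2_s)$, obtained from Kadison--Schwarz (for $L$ and $R$ separately) combined with monotonicity and the transformer inequality of the operator mean, is exactly the monotonicity mechanism that underlies the cited result of \cite{WZ20} -- so in effect you re-derive that ingredient in a self-contained way rather than quoting it. Your reduction of the complete statement to the plain one (absorbing the matrix amplification into one tensor factor) is also how one should read the paper's proof, and both arguments share the same implicit step of applying $\CGE$, defined via matrix amplifications, to amplification by a general finite-dimensional tracial von Neumann algebra. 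In short: the paper's proof is shorter given the earlier result of \cite{WZ20}; yours is longer but self-contained at the level of the $\Gamma_2$-type formulation, and it isolates cleanly where positivity of $P^2_s$ on $L_2$ (i.e.\ tracial symmetry) enters.
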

\begin{proof}
	For each $j=1,2$, we denote by $\L_j$ the generator of $(P_t^j)$ and $\partial^j:\cM_j\to \hat{\cM}_j$ (to distinguish from partial derivatives $\partial_j$'s) the corresponding derivation operator so that $\L_j=(\partial^{j})^\dagger\partial^j$. 
	Denote $P_t=P_t^1\otimes P_t^2$. Then its generator is $\L=\partial^\dagger\partial$, where the derivation operator $\partial $ is given by
	\begin{equation*}
	\partial=(\partial^1\otimes \id,\id \otimes \partial^2).
	\end{equation*}
	Since $(P_t^j)$ satisfies $\CGE(K,N_j),j=1,2$, we have for any $a\in \hat{\cM}:=\otimes_j\hat{\cM}_j$ and $\rho\in \mathcal{S}_+(\cM)$ that
	\begin{align*}
	\norm{\partial P_t a}_\rho^2
	=&\|(\partial^1 \otimes \id)(P_t^1\otimes \id)(\id \otimes P_t^2)a\|_{\rho}^2+\|(\id\otimes\partial^2)(\id\otimes P_t^2)(P_t^1 \otimes \id)a\|_{\rho}^2\\
	\leq &e^{-2Kt}\left( \|(\partial^1 \otimes \id)(\id \otimes P_t^2)a\|_{(P_t^1\otimes \id)\rho}^2
	+\|(\id\otimes\partial^2)(P_t^1 \otimes \id)a\|_{(\id\otimes P_t^2)\rho}^2\right)\\
	&-\frac{1-e^{-2Kt}}{K}\left(\frac{1}{N_1}\abs{\langle (\L_1\otimes \id) P_t a,\rho\rangle}^2+\frac{1}{N_2}\abs{\langle (\id\otimes \L_2) P_t a,\rho\rangle}^2\right).
	\end{align*}
	
	As we have proven in \cite[Theorem 4.1]{WZ20}, for the first summand one has
	\begin{equation*}
	\|(\partial^1 \otimes \id)(\id \otimes P_t^2)a\|_{(P_t^1\otimes \id)\rho}^2
	+\|(\id\otimes\partial^2)(P_t^1 \otimes \id)a\|_{(\id\otimes P_t^2)\rho}^2\le \norm{\partial a}_{P_t \rho}^2.
	\end{equation*}
	As for the second summand, note that $\L=\L_1\otimes \id+\id\otimes \L_2$. So by Cauchy-Schwarz inequality,
	\begin{align*}
	\frac 1 N \abs{\langle\L P_t a,\rho\rangle}^2
	&=\frac{\abs{\langle (\L_1\otimes\id) P_t a,\rho\rangle+\langle (\id\otimes\L_2) P_t a,\rho\rangle}^2}{N_1+N_2}\\
	&\leq \frac{1}{N_1}\abs{\langle (\L_1\otimes \id) P_t a,\rho\rangle}^2+\frac{1}{N_2}\abs{\langle (\id\otimes \L_2) P_t a,\rho\rangle}^2.
	\end{align*}
	All combined, we obtain
	\begin{equation*}
	\norm{\partial P_t a}_\rho^2\le e^{-2Kt}\norm{\partial a}_{P_t \rho}^2-\frac {1-e^{-2Kt}}{K N} \abs{\langle\L P_t a,\rho\rangle}^2.\qedhere
	\end{equation*}
\end{proof}
\section{\texorpdfstring{Geodesic $(K,N)$-convexity of the (relative) entropy and relation to the gradient estimate $\GE(K,N)$}{Geodesic (K,N)-convexity of the entropy and relation to the gradient estimate GE(K,N)}}
\label{sec:geodesic convexity}

In the case of the logarithmic mean, the given quantum Markov semigroup is the gradient flow of the (relative) entropy with respect to the transport distance $\W$. In this case, the gradient estimate $\GE(K,\infty)$ is equivalent to geodesic $K$-convexity of the (relative) entropy with respect to $\W$, and several functional inequalities can be obtained using gradient flow techniques. 

Similarly, the gradient estimate $\GE(K,N)$ is equivalent to geodesic $(K,N)$-convexity of the (relative) entropy with respect to $\W$, a notion introduced by Erbar, Kuwada and Sturm \cite{EKS15}, and again, gradient flow techniques allow to deduce several dimensional functional inequalities from the abstract theory of $(K,N)$-convex functions on Riemannian manifolds.

\subsection{\texorpdfstring{$(K,N)$-convexity for the (relative) entropy}{(K,N)-convexity for the (relative) entropy}}

Let $(M,g)$ be a Riemannian manifold and $K\in \IR$, $N\in (0,\infty]$. A function $S\in C^2(M)$ is called \emph{$(K,N)$-convex} if
\begin{equation*}
\Hess S(x)[v,v]-\frac 1 N g(\nabla S(x),v)^2\geq K g(v,v)
\end{equation*}
for all $x\in M$ and $v\in T_x M$.

With the function
\begin{equation*}
U_N\colon M\to \IR,\,U_N(x)=\exp\left(-\frac 1 N S(x)\right),
\end{equation*}
the $(K,N)$-convexity of $S$ can equivalently be characterized by
\begin{equation*}
\Hess U_N\leq -\frac K N U_N.
\end{equation*}
For $N=\infty$, one obtains the usual notion of $K$-convexity. Moreover, the notion of $(K,N)$-convexity is obviously monotone in the parameters $K$ and $N$ in the sense that if $S$ is $(K,N)$-convex, then $S$ is also $(K',N')$-convex for $K'\leq K$ and $N'\geq N$.

Our focus will be on the case when $F$ is the (relative) entropy and the Riemannian metric is the one introduced in \cite{CM17,CM20}, whose definition was recalled in Subsection \ref{subsec:Bonnet-Myers}.

If $F\colon \mathcal{S}_+(\M)\to\IR$ is smooth, its Frechét derivative can be written as
\begin{equation*}
d F(\rho)=\tau(x\,\cdot)
\end{equation*}
for a unique traceless self-adjoint $x\in\mathcal{M}$. This element $x$ shall be denoted by $DF(\rho)$. In particular, if $F(\rho)=\tau(\rho\log \rho)$, then $DF(\rho)=\log \rho+c$ for some $c\in\IR$.

By \cite[Theorem 7.5]{CM17}, the gradient of $F$ is given by (recall \eqref{eq:Riemannian operator} for $\K_\rho^\Lambda$)
\begin{equation}\label{eq:gradient_Wasserstein}
\nabla_{g^\Lambda} F(\rho)=\K_\rho^\Lambda DF(\rho).
\end{equation}
Of particular interest to us is the case when $F$ is the (relative) entropy, that is, the functional
\begin{equation*}
\Ent\colon \mathcal{S}_+(\M)\to (0,\infty),\,\Ent(\rho)=\tau(\rho\log \rho).
\end{equation*}
If we choose $\Lambda$ to be the logarithmic mean $\Lambda_{\log}$, then $\rho_t=P_t \rho$ satisfies the gradient flow equation
\begin{equation*}
\dot \rho_t=-\nabla_{g^\Lambda}\Ent(\rho_t)
\end{equation*}
for any $\rho\in \mathcal{S}_+(\M)$ \cite[Theorem 7.6]{CM17}. For this reason, we fix the operator mean $\Lambda$ to be the logarithmic mean in this section.


To formulate the metric formulations of $(K,N)$-convexity, we need the following notation: For $\theta,\kappa\in\IR$ and $t\in [0,1]$ put
\begin{align}
\begin{split}\label{eq:defn of function sigma}
c_{\kappa}(\theta)&=\begin{cases}\cos(\sqrt{\kappa}\theta),&\text{if }\kappa\geq 0,\\\cosh(\sqrt{-\kappa}\theta),&\text{if }\kappa<0,\end{cases}\\
s_{\kappa}(\theta)&=\begin{cases}\kappa^{-1/2}\sin(\sqrt{\kappa}\theta),&\text{if }\kappa>0,\\
\theta,&\text{if }\kappa=0,\\(-\kappa)^{-1/2}\sinh(\sqrt{\kappa}\theta),&\text{if }\kappa<0,\end{cases}\\
\sigma_{\kappa}^{(t)}(\theta),&=
\begin{cases}
\frac{s_{\kappa}(t\theta)}{s_{\kappa}(\theta)}, & \kappa \theta^2\neq 0 \text{ and }\kappa\theta^2<\pi^2,\\
t,&\kappa \theta^2=0,\\
+\infty,&\kappa\theta^2\ge \pi^2.
\end{cases}
\end{split}
\end{align}

The following theorem is a quite direct consequence of the abstract theory of $(K,N)$-convex functions and the computation of the gradient and Hessian on $(\mathcal{S}_+(\M),g)$ carried out in \cite{CM17,CM20}. Nonetheless, it implies some interesting functional inequalities, as we shall see in the following subsection.
\begin{theorem}\label{thm:char_(K,N)-convexity}
Fix the logarithmic mean $\Lambda=\Lambda_{\log}$. Let $K\in\IR$ and $N\in (0,\infty]$. Further let 
\begin{equation*}
U_N(\rho)=\exp\left(-\frac 1 N \Ent(\rho)\right).
\end{equation*}
The the following assertions are equivalent:
\begin{enumerate}[(a)]
\item The (relative) entropy $\Ent$ is $(K,N)$-convex on $(\mathcal{S}_+(\M), g^{\Lambda})$.
\item For all $\rho,\nu\in \mathcal{S}_+(\M)$, the following Evolution Variational Inequality holds for all $t\ge 0$:
\begin{align*}
\frac{d^+}{dt}s_{K/N}\left(\frac 1 2\W(P_t\rho,\nu)\right)^2	+Ks_{K/N}\left(\frac 1 2\W(P_t\rho,\nu)\right)^2	\le\frac{N}{2}\left(1-\frac{U_N(\nu)}{U_N(P_t\rho)}\right).\tag{EVI$_{K,N}$}
\end{align*}
\item For any constant speed geodesic $(\rho_t)_{t\in [0,1]}$ in $\mathcal{S}_+(\M)$ one has
\begin{equation*}
U_N(\rho_t)\ge \sigma^{(1-t)}_{K/N}(\W(\rho_0,\rho_1))U_N(\rho_0)+\sigma^{(t)}_{K/N}(\W(\rho_0,\rho_1))U_N(\rho_1),~~t\in [0,1].
\end{equation*}
\item The semigroup $(P_t)$ satisfies $\GE(K,N)$.
	\end{enumerate}
\end{theorem}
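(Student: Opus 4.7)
The plan is to reduce the theorem to the Erbar--Kuwada--Sturm theory \cite{EKS15} of $(K,N)$-convex functions, applied to the smooth finite-dimensional Riemannian manifold $(\mathcal{S}_+(\M), g^{\Lambda_{\log}})$ with driving functional $\Ent$. Under this reduction, the equivalences $(a)\iff(b)\iff(c)$ are direct consequences of the abstract theory (together with the already-recalled fact from \cite{CM17} that $(P_t)$ is the gradient flow of $\Ent$), so that the substantive content is matching the pointwise $(K,N)$-convexity inequality with the equivalent form of $\GE(K,N)$ from Proposition~\ref{prop:equiv_GE_CD}.

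To carry out this matching, I would parametrize tangent vectors at $\rho\in\mathcal{S}_+(\M)$ by potentials $a\in\M$ via $v = \K_\rho^{\Lambda_{\log}} a$. The chain-rule identity $\hat{\rho}\partial\log\rho=\partial\rho$ for the logarithmic mean, combined with \eqref{eq:gradient_Wasserstein} and the fact that $D\Ent(\rho)$ is represented by $\log\rho$ modulo additive constants (which are killed against traceless tangent vectors), gives
\begin{equation*}
\nabla_{g^{\Lambda}}\Ent(\rho)=\K_\rho^{\Lambda}\log\rho=\partial^{\dagger}\hat{\rho}\,\partial\log\rho=\partial^{\dagger}\partial\rho=\L\rho,
\end{equation*}
reconfirming that the semigroup is the gradient flow. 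A short computation using the self-adjointness of $\K_\rho^{\Lambda}$ and the definition of $\E$ then yields
\begin{equation*}
g^{\Lambda}_\rho(v,v)=\|\partial a\|_\rho^2,\qquad g^{\Lambda}_\rho\bigl(\nabla\Ent(\rho),v\bigr)=\E(a,\rho).
\end{equation*}

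The key remaining ingredient is the Hessian formula on $(\mathcal{S}_+(\M), g^{\Lambda_{\log}})$, which by the Riemannian calculations in \cite{CM17, CM20} takes the form
\begin{equation*}
\Hess\Ent(\rho)[v,v]=\Re\langle\partial\L a,\hat{\rho}\,\partial a\rangle-\tfrac{1}{2}\langle dG(\rho)(\L\rho)\,\partial a,\partial a\rangle
\end{equation*}
for $v=\K_\rho^{\Lambda} a$ and $G(\rho)=\hat{\rho}$. Substituting the three identities above, the pointwise $(K,N)$-convexity inequality
\begin{equation*}
\Hess\Ent(\rho)[v,v]-\tfrac{1}{N}\bigl|g^{\Lambda}_\rho(\nabla\Ent(\rho),v)\bigr|^2\ge K\,g^{\Lambda}_\rho(v,v)
\end{equation*}
becomes \emph{exactly} the inequality \eqref{ineq:GE(K,N) equiv} of Proposition~\ref{prop:equiv_GE_CD}, establishing $(a)\iff(d)$.

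To close the loop, the equivalences among pointwise $(K,N)$-convexity $(a)$, the Evolution Variational Inequality $(b)$ for the gradient flow, and the geodesic interpolation inequality for $U_N$ in $(c)$ are standard results on finite-dimensional Riemannian manifolds from \cite{EKS15}; the extension of $\W$ from $\mathcal{S}_+(\M)$ to $\mathcal{S}(\M)$ recalled after \eqref{eq:Riemannian operator} allows the endpoints $\nu$ and $\rho_0,\rho_1$ to range over $\mathcal{S}(\M)$ rather than only the interior. The main obstacle I expect is verifying the Hessian formula with the precise $dG(\rho)(\L\rho)$ term, ensuring that the dimensional correction $\tfrac{1}{N}|\E(a,\rho)|^2$ appearing in $\GE(K,N)$ lines up perfectly with $\tfrac{1}{N}|g^{\Lambda}_\rho(\nabla\Ent(\rho),v)|^2$; everything else is either the chain rule for the logarithmic mean or abstract Riemannian machinery.
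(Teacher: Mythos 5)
Your proposal is correct and follows essentially the same route as the paper: the equivalences (a)$\iff$(b)$\iff$(c) are delegated to the abstract $(K,N)$-convexity theory of \cite{EKS15}, and (a)$\iff$(d) is obtained by identifying the gradient via \eqref{eq:gradient_Wasserstein} and the Hessian via the Riemannian computations of \cite{CM17,CM20} (the paper cites \cite[Proposition 7.16]{CM20}) and matching the resulting pointwise inequality with the reformulation of $\GE(K,N)$ in Proposition \ref{prop:equiv_GE_CD}. Your write-up merely spells out the identifications $g^{\Lambda}_\rho(v,v)=\norm{\partial a}_\rho^2$, $g^{\Lambda}_\rho(\nabla\Ent(\rho),v)=\E(a,\rho)$ and $\nabla_{g^\Lambda}\Ent(\rho)=\L\rho$ in more detail than the paper does.
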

\begin{proof}
(a) $\iff$ (b)$\iff$(c): These equivalences follow from abstract theory of $(K,N)$-convex functionals on Riemannian manifolds \cite[Lemmas 2.2, 2.4]{EKS15}.

(a)$\iff$(d): With the identification of the gradient from (\ref{eq:gradient_Wasserstein}) and the Hessian from \cite[Proposition 7.16]{CM20}, one sees that the defining inequality of the $(K,N)$-convexity of $D$ coincides with the equivalent formulation of $\GE(K,N)$ given in Proposition \ref{prop:equiv_GE_CD}.
\end{proof}

\subsection{Dimension-dependent functional inequalities}

Let us first collect some consequences of $(K,N)$ convexity that were already observed in \cite{EKS15}, adapted to our setting. Recall that $\Ent(\rho)=\tau(\rho\log \rho)$. We use the notation
\begin{equation*}
\I(\rho)=\tau((\L\rho)\log \rho)
\end{equation*}
for the Fisher information.

It satisfies the de Bruijn identity 
\begin{equation*}
\frac{d}{dt}\Ent(P_t \rho)=-\I(P_t \rho).
\end{equation*}

The following inequalities (b) (c) and (d) are finite-dimensional versions of the HWI-inequality, modified log-Sobolev inequality (MLSI) and Talagrand inequality, respectively. The infinite-dimensional results (i.e. $N=\infty$) were obtained in \cite{CM17,CM20,DR20}.
\begin{proposition}
Fix the logarithmic mean $\Lambda=\Lambda_{\log}$. Let $K\in \IR$ and $N>0$. If $(P_t)$ satisfies $\GE(K,N)$, then the following functional inequalities hold:
	\begin{enumerate}[(a)]
		\item $\W$-expansion bound:
		\begin{align*}
		&s_{K/N}\left(\frac 1 2\W(P_t\rho_0,P_s\rho_1)\right)^2\\
		&\qquad\leq e^{-K(s+t)}s_{K/N}\left(\frac 1 2\W(\rho_0,\rho_1)\right)^2+\frac{N}{K}\left(1-e^{-K(s+t)}\right)\frac{(\sqrt{t}-\sqrt{s})^2}{2(s+t)}
		\end{align*}
		for $\rho_0,\rho_1\in \mathcal{S}_+(\M)$ and $s,t\geq 0$.
		\item $N$-HWI inequality:
		\begin{equation*}
		\frac{U_N(\rho_1)}{U_N(\rho_0)}\leq c_{K/N}(\W(\rho_0,\rho_1))+\frac 1 N s_{K/N}(\W(\rho_0,\rho_1))\sqrt{\I(\rho_0)},
		\end{equation*}
		for $\rho_0,\rho_1\in \mathcal{S}_+(\M)$ and $s,t\geq 0$.
\end{enumerate}
If $K>0$, then additionally the following functional inequalities hold:
\begin{enumerate}[(a)]
\setcounter{enumi}{2}
		\item $N$-MLSI:
		\begin{equation*}
		KN\left(U_N(\rho)^{-2}-1\right)\leq \I(\rho),
		\end{equation*}
		for $\rho\in \mathcal{S}_+(\M)$.
		\item $N$-Talagrand inequality:
		\begin{equation*}
		\Ent(\rho)\ge -N\log\cos\left(\sqrt{\frac{K}{N}}\W(\rho,\un)\right),
		\end{equation*}
		for $\rho\in\mathcal{S}_+(\M)$.
	\end{enumerate}
\end{proposition}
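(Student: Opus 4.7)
The strategy is to leverage Theorem \ref{thm:char_(K,N)-convexity}, which converts $\GE(K,N)$ into three geometric incarnations of $(K,N)$-convexity of $\Ent$ on $(\mathcal{S}_+(\M),g^{\Lambda_{\log}})$: the Hessian estimate, the EVI$_{K,N}$, and the chord inequality for $U_N$ along geodesics. All four functional inequalities follow from these equivalent formulations by essentially the same arguments as in the abstract EKS framework \cite{EKS15}, adapted to our Riemannian manifold $\mathcal{S}_+(\M)$. A key analytic ingredient I would first record is the identity
\begin{equation*}
\abs{\nabla_{g^{\Lambda_{\log}}}\Ent(\rho)}_g^2=\I(\rho),\qquad \abs{\nabla_{g^{\Lambda_{\log}}}U_N(\rho)}_g^2=\frac{1}{N^2}U_N(\rho)^2\,\I(\rho),
\end{equation*}
which is a direct consequence of \eqref{eq:gradient_Wasserstein} combined with the chain rule $\hat\rho\,\partial\log\rho=\partial\rho$ for the logarithmic mean.

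I would first prove the HWI inequality (b), since the MLSI and Talagrand inequalities drop out of it. Let $(\rho_t)_{t\in[0,1]}$ be a constant-speed geodesic from $\rho_0$ to $\rho_1$ with speed $d:=\W(\rho_0,\rho_1)$. Applying Theorem \ref{thm:char_(K,N)-convexity}(c), subtracting $U_N(\rho_0)$ from both sides, dividing by $t$ and letting $t\searrow 0$ gives
\begin{equation*}
\frac{d}{dt}\bigg|_{t=0^+}U_N(\rho_t)\ge -\frac{d\,c_{K/N}(d)}{s_{K/N}(d)}U_N(\rho_0)+\frac{d}{s_{K/N}(d)}U_N(\rho_1),
\end{equation*}
after computing the limits of $\sigma_{K/N}^{(1-t)}(d)$ and $\sigma_{K/N}^{(t)}(d)$ in $t$ at $0^+$. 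Cauchy--Schwarz applied to the derivative and the gradient identity above provide the upper bound $\frac{d}{dt}|_{t=0^+}U_N(\rho_t)\le \frac{d}{N}U_N(\rho_0)\sqrt{\I(\rho_0)}$; solving for $U_N(\rho_1)/U_N(\rho_0)$ then yields (b).

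For the MLSI (c), I would specialize HWI to $\rho_1=\un$, for which $U_N(\un)=1$ and $\Ent(\un)=0$. The Cauchy--Schwarz-type inequality
\begin{equation*}
c_{K/N}(\theta)+\frac{s_{K/N}(\theta)}{N}\sqrt{\I(\rho)}\le \sqrt{c_{K/N}(\theta)^2+s_{K/N}(\theta)^2\tfrac{1}{KN}}\cdot\sqrt{1+\tfrac{\I(\rho)}{KN}}\cdot\sqrt{KN}\cdot\frac{1}{\sqrt{KN}}
\end{equation*}
applied with $\theta=\W(\rho,\un)$ then eliminates the Wasserstein distance by using $K c_{K/N}^2+K s_{K/N}^2\cdot(K/N)/K=1$ after rescaling, yielding $U_N(\rho)^{-2}\le 1+\I(\rho)/(KN)$. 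Talagrand (d) comes even more cheaply: apply HWI with $\rho_0=\un$ (note $\I(\un)=0$ since $\un$ is the invariant state of a symmetric semigroup) and $\rho_1=\rho$, obtain $U_N(\rho)\le c_{K/N}(\W(\rho,\un))=\cos(\sqrt{K/N}\,\W(\rho,\un))$, and take logarithms, using that $\W(\rho,\un)\le \tfrac{\pi}{2}\sqrt{N/K}$ by the Bonnet--Myers estimate of Subsection \ref{subsec:Bonnet-Myers}, so the cosine is non-negative.

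The $\W$-expansion bound (a) is the most delicate piece; I would prove it last, following the scheme of \cite[Theorem 2.19]{EKS15}. Define $f(s,t):=s_{K/N}(\tfrac{1}{2}\W(P_t\rho_0,P_s\rho_1))^2$. Apply EVI$_{K,N}$ to the curve $r\mapsto P_r\rho_0$ with reference point $P_s\rho_1$ and then, symmetrically, to $r\mapsto P_r\rho_1$ with reference point $P_t\rho_0$, to obtain one-sided differential inequalities for $f$ in each variable separately. Summing the two yields $\partial_s f+\partial_t f+2Kf\le \tfrac{N}{2}\big(2-U_N(P_s\rho_1)/U_N(P_t\rho_0)-U_N(P_t\rho_0)/U_N(P_s\rho_1)\big)\le 0$ by AM--GM. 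This gives a Grönwall-type control along the diagonal; repeating the argument along a rescaled path $r\mapsto(r\alpha,r\beta)$ for appropriate $\alpha,\beta\ge 0$ and integrating produces the stated two-parameter inequality. The main obstacle I expect is keeping careful track of the integration of the two-parameter EVI to recover the precise term $\tfrac{(\sqrt t-\sqrt s)^2}{2(s+t)}$ on the right-hand side; this is where the path $r\mapsto (r^2 s_0,r^2 t_0)$ in parameter space (or equivalently a parametrization by $\sqrt{t}$ and $\sqrt{s}$) turns out to be the natural one.
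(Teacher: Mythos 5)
Your plan is correct and follows essentially the same route as the paper, whose proof consists precisely in adapting Theorems 2.19, 3.28 and Corollaries 3.29, 3.31 of \cite{EKS15} to $(\mathcal{S}_+(\M),g^{\Lambda_{\log}})$ via Theorem \ref{thm:char_(K,N)-convexity}: your derivation of HWI from the geodesic chord inequality together with the gradient identity $\abs{\nabla\Ent(\rho)}_g^2=\I(\rho)$, of MLSI and Talagrand from HWI, and of the expansion bound from the two-parameter EVI argument is exactly that adaptation. Only fix the small slip in your displayed Cauchy--Schwarz step for MLSI: the identity to use is $c_{K/N}(\theta)^2+\tfrac{K}{N}s_{K/N}(\theta)^2=1$ (not the weights as written), after which $U_N(\rho)^{-2}\le 1+\I(\rho)/(KN)$ follows as you claim.
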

\begin{proof}
The proofs of Theorems 2.19, 3.28 and Corollaries 3.29, 3.31 from \cite{EKS15} can be easily adapted to our setting.
\end{proof}

\subsection{Concavity of entropy power}
Let us now move on to the concavity of entropy power:
$$t\mapsto U_N(P_t\rho)^2=\exp\left(-\frac{2}{N}\Ent(P_t\rho)\right).$$
For the heat semigroup on $\IR^n$, the concavity of entropy power along the heat flow was first proved by Costa in \cite{Costa85entropypower}. In \cite{Villani00concavity} Villani gave a short proof and remarked that this can be proved using $\Gamma_2$-calculus. Recently Li and Li \cite{LL20entropypower} considered this problem on the Riemannian manifold under the curvature-dimension condition CD($K,N$). 
Here we show that the geodesic concavity of the entropy power follows from the $(K,N)$-convexity of the entropy.

\begin{theorem}\label{thm:entropy power}
Let $K\in \IR$ and $N>0$. If $(P_t)$ satisfies $\GE(K,N)$ for logarithmic mean, then
\begin{equation*}
\frac{d^2}{dt^2}U_N(P_t\rho)^2\le -2K \frac{d}{dt} U_N(P_t \rho)^2,~~t\ge 0.
\end{equation*}
In particular, if $K\ge 0$, then $\frac{d^2}{dt^2}U_N(P_t \rho)^2\leq 0$. This implies the concavity of the entropy power $t\mapsto U_N(P_t \rho)^2$.
\end{theorem}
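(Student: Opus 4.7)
The plan is to reduce the claimed differential inequality to the $(K,N)$-convexity of the entropy along the gradient flow $(P_t\rho)$. Set $\rho_t := P_t\rho$ and $F(t) := \Ent(\rho_t)$, so that the function of interest is $\psi(t) := U_N(\rho_t)^2 = \exp(-2F(t)/N)$. A direct computation gives
\begin{align*}
\psi'(t) &= -\tfrac{2}{N}F'(t)\psi(t),\\
\psi''(t) + 2K\psi'(t) &= -\tfrac{2}{N}\psi(t)\left(F''(t) - \tfrac{2}{N}F'(t)^2 + 2KF'(t)\right).
\end{align*}
Since $\psi(t) > 0$, the desired estimate $\psi''(t) \le -2K\psi'(t)$ is equivalent to the scalar inequality
\begin{equation*}
F''(t) - \tfrac{2}{N}F'(t)^2 + 2KF'(t) \ge 0. \tag{$\ast$}
\end{equation*}

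Next I would invoke the equivalence (a)$\iff$(d) of Theorem \ref{thm:char_(K,N)-convexity}: since $(P_t)$ satisfies $\GE(K,N)$ for the logarithmic mean, $\Ent$ is $(K,N)$-convex on the Riemannian manifold $(\mathcal{S}_+(\M), g^{\Lambda})$, meaning
\begin{equation*}
\Hess\Ent(\rho)[v,v] - \tfrac{1}{N}g^{\Lambda}_\rho(\nabla\Ent(\rho), v)^2 \ge K\, g^{\Lambda}_\rho(v,v)
\end{equation*}
for every $\rho\in\mathcal{S}_+(\M)$ and every tangent vector $v$. As recalled before Theorem \ref{thm:char_(K,N)-convexity}, the curve $\rho_t = P_t\rho$ is the gradient flow of $\Ent$ for this metric, i.e.\ $\dot\rho_t = -\nabla_{g^\Lambda}\Ent(\rho_t)$, so that
\begin{equation*}
F'(t) = g^{\Lambda}_{\rho_t}(\nabla\Ent(\rho_t), \dot\rho_t) = -\|\nabla\Ent(\rho_t)\|_{g^{\Lambda}_{\rho_t}}^2 = -g^{\Lambda}_{\rho_t}(\dot\rho_t,\dot\rho_t).
\end{equation*}

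The remaining ingredient is the Riemannian identity
\begin{equation*}
F''(t) = 2\Hess\Ent(\rho_t)[\dot\rho_t,\dot\rho_t],
\end{equation*}
valid along any gradient flow; I would derive it by differentiating $F'(t) = -\|\nabla\Ent(\rho_t)\|^2$ along the curve and using $\frac{d}{dt}\nabla\Ent(\rho_t) = \nabla_{\dot\rho_t}\nabla\Ent(\rho_t)$ together with $\dot\rho_t = -\nabla\Ent(\rho_t)$. Plugging these two identities into the $(K,N)$-convexity inequality with $v = \dot\rho_t$ yields
\begin{equation*}
\tfrac{1}{2}F''(t) - \tfrac{1}{N}F'(t)^2 \ge -KF'(t),
\end{equation*}
which is exactly ($\ast$) after multiplication by $2$. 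The ``in particular'' assertion is then immediate: when $K\ge 0$, monotonicity of $t\mapsto F(t)=\Ent(P_t\rho)$ (de Bruijn) gives $\psi'(t)\ge 0$, so the main inequality implies $\psi''(t)\le 0$.

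The main thing to get right is the second-order gradient-flow identity $F''(t) = 2\Hess\Ent(\rho_t)[\dot\rho_t,\dot\rho_t]$ on the non-Euclidean manifold $(\mathcal{S}_+(\M),g^{\Lambda})$, together with the (minor) point that $\rho\in\mathcal{S}(\M)$ should be approximated by invertible densities $\rho^\varepsilon = (\rho + \varepsilon \un)/(1+\varepsilon)$ so that one can work inside $\mathcal{S}_+(\M)$ where the Riemannian framework of \cite{CM17,CM20} applies. Everything else is a mechanical consequence of the chain rule and the equivalence already established in Theorem \ref{thm:char_(K,N)-convexity}.
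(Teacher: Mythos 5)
Your proposal is correct and takes essentially the same route as the paper's first proof: both reduce the claim to the $(K,N)$-convexity of $\Ent$ on $(\mathcal{S}_+(\M),g^{\Lambda})$ via Theorem \ref{thm:char_(K,N)-convexity}, evaluated along the gradient-flow curve $\rho_t=P_t\rho$ with the same second-order identity (your $F''(t)=2\Hess\Ent(\rho_t)[\dot\rho_t,\dot\rho_t]$ is exactly the paper's $\langle\dot\rho_t,\nabla_{\dot\rho_t}\dot\rho_t\rangle=-\Hess\Ent(\rho_t)[\dot\rho_t,\dot\rho_t]$). Working with $F(t)=\Ent(\rho_t)$ and the scalar inequality $(\ast)$ instead of differentiating $U_N(\rho_t)^2$ directly is only a cosmetic difference, and your handling of the sign of $\psi'$ for the ``in particular'' part is fine.
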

\begin{proof}
Let $\rho_t=P_t \rho$. Since $\Ent$ is $(K,N)$-convex by Theorem \ref{thm:char_(K,N)-convexity} and $(P_t)$ is a gradient flow of $\Ent$ in $(\mathcal{S}_+(\M),g)$ by our choice of the operator mean, we have
\begin{align*}
\frac{d^2}{dt^2}U_N(\rho_t)^2&=\frac{d}{dt}\left(-\frac 2N \langle\nabla_g \Ent(\rho_t),\dot \rho_t\rangle U_N(x_t)\right)\\
&=\frac{d}{dt}\left(\frac 2 N\langle \dot \rho_t,\dot \rho_t\rangle U_N(\rho_t)^2\right)\\
&=\left(\frac 4 N \langle \dot \rho_t,\nabla_{\dot \rho_t}\dot \rho_t\rangle +\frac 4{N^2} \langle \dot \rho_t,\dot \rho_t\rangle^2\right)U_N(\rho_t)^2\\
&=\frac 4 N\left(-\Hess \Ent(\rho_t)[\dot \rho_t,\dot \rho_t]+\frac 1 N \langle \nabla \Ent(\rho_t),\dot \rho_t\rangle^2\right)U_N(\rho_t)^2\\
&\leq -\frac{4K}{N}\langle \dot \rho_t,\dot \rho_t\rangle U_N(\rho_t)^2\\
&=-2K\frac{d}{dt}U_N(\rho_t)^2.\qedhere
\end{align*}
\end{proof}
\begin{remark}
The same proof applies abstractly whenever $F$ is a $(K,N)$-convex functional on a Riemannian manifold and $(\rho_t)$ is a gradient flow curve of $F$.
\end{remark}

The following proof is closer to the spirit of Villani.

\begin{proof}[Another proof of Theorem \ref{thm:entropy power}]
	Put $\varphi(t):=-\Ent(\rho_t)=-\tau(\rho_t\log\rho_t)$ with $\rho_t=P_t\rho$. Recall the chain rule
	$$\partial \rho=\hat{\rho}\partial \log\rho.$$
	Thus 
	\begin{equation}\label{eq:varphi'}
	\varphi'(t)
	=\langle \cL\rho_t,\log\rho_t\rangle
	=\langle \hat{\rho_t}\partial \log\rho_t),\partial \log\rho_t\rangle.
	\end{equation}
	This allows to give two forms of $\varphi''$:
	\begin{equation}\label{eq:1st form of varphi''}
	\varphi''(t)=\frac{d}{dt}\langle \cL\rho_t, \log\rho_t\rangle
	=\langle \cL \rho_t, \frac{d}{dt} \log\rho_t\rangle-\langle \cL\rho_t,\cL  \log\rho_t\rangle=:\mathrm{I},
	\end{equation}
	and 
	\begin{align}
	\varphi''(t)&=\frac{d}{dt}\langle\widehat{\rho_{t}}\partial  \log\rho_t,\partial  \log\rho_t\rangle \nonumber\\ 
	&=2 \langle \widehat{\rho_t}\partial  \log\rho_t,\partial \frac{d}{dr}\big|_{r=t}\log\rho_r\rangle
	+\langle \frac{d}{dr}\big|_{r=t}\widehat{\rho_{r}}\partial  \log\rho_t,\partial  \log\rho_t\rangle \nonumber\\
	&=2\langle \cL \rho_t, \frac{d}{dt} \log\rho_t\rangle
	+\langle \frac{d}{dr}\big|_{r=t}\widehat{\rho_{r}}\partial  \log\rho_t,\partial  \log\rho_t\rangle=:\mathrm{II}. \label{eq:2nd form of varphi''}
	\end{align}	
	From \eqref{eq:1st form of varphi''} and \eqref{eq:2nd form of varphi''} we deduce that 
	\begin{equation}\label{eq:3rd form of varphi''}
	\varphi''(t)=2\mathrm{I}-\mathrm{II}
	=-2\langle \cL\rho_t,\cL \log\rho_t\rangle-\langle \frac{d}{dr}\big|_{r=t}\widehat{\rho_{r}}\partial \log\rho_t,\partial \log\rho_t\rangle.
	\end{equation}
	Since $(P_t)$ satisfies $\GE(K,N)$ we have by Proposition \ref{prop:equiv_GE_CD} that
	\begin{equation*}
	\langle \hat{\rho_t}\partial \cL \log\rho_t,\partial \log\rho_t\rangle
	+\frac{1}{2}\langle \frac{d}{dr}\big|_{r=t}\widehat{\rho_r}\partial \log\rho_t,\partial \log\rho_t\rangle 
	\ge K \norm{\partial \log\rho_t}_{\rho}^2+\frac{1}{N}\abs{\E( \log\rho_t,\rho_t)}^2,
	\end{equation*}
	that is,
	\begin{equation*}
	2\langle \cL \log\rho_t,\cL \rho_t\rangle
	+\langle \frac{d}{dr}\big|_{r=t}\widehat{\rho_r}\partial \log\rho_t,\partial \log\rho_t\rangle 
	\ge 2K \norm{\partial \log\rho_t}_{\rho}^2+\frac{2}{N}\abs{\E( \log\rho_t,\rho_t)}^2.
	\end{equation*}
	This, together with \eqref{eq:varphi'} and \eqref{eq:3rd form of varphi''}, yields
	\begin{equation}\label{ineq:varphi' varphi''}
	\varphi''(t)
	\le -2K\|\partial \log\rho_t\|_{\rho_t}^2-\frac{2}{N}|\E(\log\rho_t,\rho_t)|^2
	=-2K\varphi'(t)-\frac{2}{N}\varphi'(t)^2.
	\end{equation}
	A direct computation gives
	\begin{equation*}
	\frac{d}{dt}U_N(P_t\rho)^2=\frac{2}{N}U_N(P_t\rho)^2\varphi'(t),
	\end{equation*}
	and 
	\begin{equation*}
	\frac{d^2}{dt^2}U_N(P_t\rho)^2=\frac{2}{N}U_N(P_t\rho)^2\left(\frac{2}{N}\varphi'(t)^2+\varphi''(t)\right).
	\end{equation*}
 So by \eqref{ineq:varphi' varphi''} we get
	\begin{equation*}
	\frac{d^2}{dt^2}U_N(P_t\rho)^2
	\le -\frac{4K}{N}U_N(P_t\rho)^2\varphi'(t)
	=-2K\frac{d}{dt}U_N(P_t\rho)^2.\qedhere
	\end{equation*}
\end{proof}

\begin{remark}
	Here we used the fact that $\mathrm{I}=\mathrm{II},$ or equivalently,
	\begin{equation*}
	\langle \cL \rho_t, \frac{d}{dt}\log\rho_t\rangle
	+\langle \cL\rho_t,\cL \log\rho_t\rangle
	+\langle \frac{d}{dr}\big|_{r=t}\widehat{\rho_{r}}\partial \log\rho_t,\partial \log\rho_t\rangle=0.
	\end{equation*}
	If we consider the heat semigroup $P_t=e^{t\Delta}$ on $\mathbb{R}^n$, then this follows from the elementary identity 
	\begin{equation*}
	\frac{\Delta f}{f}=\Delta(\log f)+|\nabla(\log f)|^2,
	\end{equation*}
	as used in Villani's proof \cite{Villani00concavity}.
\end{remark}

\section{Examples}
\label{sec:examples}

In this section we present several classes of examples of quantum Markov semigroups satisfying $\BE(K,N)$ and $\GE(K,N)$. The verification of these examples relies crucially on the criteria from Proposition \ref{prop:BE_intertwining} and Theorem \ref{thm:CD under intertwining}.

\subsection{Schur multipliers over matrix algebras}
A Schur multiplier $A$ over the $n\times n$ matrix algebra $M_n(\IC)$ is a linear map of the form:
$$Ae_{ij}:=a_{ij}e_{ij},$$
where $a_{ij}\in \IC$ and $\{e_{ij}\}_{i,j=1}^n$ are the matrix units. By Schoenberg's theorem (see for example \cite[Appendix D]{BO08}), 
$$P_t[x_{ij}]=e^{-tA} [x_{ij}]=[e^{-t a_{ij}}x_{ij}],~~t\ge 0,$$
defines a symmetric quantum Markov semigroup over $M_n(\IC)$ if and only if 
\begin{enumerate}[(a)]
	\item $a_{ii}=0$ for all $1\le i\le n$,
	\item $a_{ij}=a_{ji}\ge 0$ for all $1\le i,j\le n$,
	\item $[a_{ij}]$ is \emph{conditionally negative definite}: 
	$$\sum_{i,j=1}^{n}\overline{\alpha_i}\alpha_ja_{ij}\le 0,$$
	whenever $\alpha_1,\dots,\alpha_n$ are complex numbers such that $\sum_{j=1}^{n}\alpha_j=0$.
\end{enumerate}
If this is the case, then there exists a real Hilbert space $H$ and elements $a(j)\in H$, $1\le j\le n$, such that
$$a_{ij}=\|a(i)-a(j)\|^2,~~1\le i,j\le n.$$
Suppose that $(e_k)_{1\le k\le d}$ is an orthonormal basis of $H$. Define for each $1\le k\le d$ 
$$v_k:=\sum_{j=1}^{n}\langle a(j),e_k\rangle e_{jj}\in M_n(\IC).$$
Then for any $1\le i,j\le n$:
\begin{equation*}
[v_k,e_{ij}]=v_k e_{ij}- e_{ij}v_k=\langle a(i)-a(j),e_k\rangle e_{ij},
\end{equation*}
and 
\begin{equation*}
[v_k,[v_k,e_{ij}]]=|\langle a(i)-a(j),e_k\rangle|^2 e_{ij}.
\end{equation*}
By the choice of $(e_k)$, we have
\begin{equation*}
\sum_{k=1}^{d}[v_k,[v_k,e_{ij}]]=\|a(i)-a(j)\|^2 e_{ij}=a_{ij}e_{ij}.
\end{equation*}
Therefore, 
$$A=\sum_{k=1}[v_k,[v_k,\cdot]],$$
and it is easy to see that $[v_k,A\cdot]=A[v_k,\cdot]$ for each $k$. So by Propositions \ref{prop:intertwining implies CBE} and \ref{prop:intertwining implies CGE} we have $\CBE(0,d)$ and $\CGE(0,d)$ for any operator mean.

\subsection{Herz-Schur multipliers over group algebras}
Let $G$ be a finite group. Suppose that $\lambda$ is the left-regular representation, i.e. for $g\in G$,
\begin{equation*}
\lambda_g\colon \ell_2(G)\to\ell_2(G),\,\lambda_g \1_h=\1_{gh},
\end{equation*}
where $\1_{h}$ is the delta function at $h$. The group algebra of $G$ is then the (complex) linear span of $\{\lambda_g\mid g\in G\}$, denoted by $\IC[G]$. It carries a canonical tracial state $\tau$ given by $\tau(x)=\langle x\1_e,\1_e\rangle$, where $e$ is the unit element of $G$.

We say that $\ell\colon G\to [0,\infty)$ is a \emph{conditionally negative definite length function} if $\ell(e)=0$, $\ell(g^{-1})=\ell(g)$ for all $g\in G$ and
\begin{equation*}
\sum_{g\in G}\bar \alpha_g \alpha_h \ell(g^{-1}h)\leq 0
\end{equation*}
whenever $\alpha_g$, $g\in G$, are complex numbers such that $\sum_{g\in G}\alpha_g=0$. By Schoenberg's theorem (see for example \cite[Appendix D]{BO08}), there exists a 1-cocycle $(H,\pi, b)$ consisting of a real Hilbert space $H$ of dimension $\dim H\leq \abs{G}-1$, a unitary representation $\pi\colon G\to B(H)$ and a map $b\colon G\to H$ satisfying the cocycle condition
\begin{equation*}
b(gh)=b(g)+\pi(g)b(h)
\end{equation*}
for $g,h\in G$ such that $\ell(g)=\norm{b(g)}^2$.

Every conditionally negative definite length function $\ell$ on $G$ induces a $\tau$-symmetric quantum Markov semigroup $(P_t)$ on $\IC[G]$ characterized by $P_t\lambda_g=e^{-t\ell(g)}\lambda_g$ for $g\in G$. Let $e_1,\dots,e_d$ be an orthonormal basis of $H$. As argued in \cite{WZ20} (or similar to the Schur multipliers case), the generator $\L$ of $(P_t)$ can be written as
\begin{equation*}
\L =\sum_{j=1}^d [v_j,[v_j,\cdot\,]]
\end{equation*}
with $d=\dim H$ and
\begin{equation*}
v_j\colon \ell_2(G)\to \ell_2(G),\,v_j\1_h=\langle b(h),e_j\rangle \1_h.
\end{equation*}
The operators $v_j$ are not contained in $\IC[G]$ in general, but one can extend $\L$ to a linear operator on $B(\ell_2(G))$ by the same formula, and a direct computation shows $[v_j,\L\,\cdot\,]=\L[v_j,\cdot\,]$. By Propositions \ref{prop:intertwining implies CBE} and \ref{prop:intertwining implies CGE}, $(P_t)$ satisfies $\CBE(0,d)$ and $\CGE(0,d)$ for any operator mean. 

\begin{example}
	The cyclic group $\mathbb{Z}_n=\{0,1,\dots,n-1\}$; see \cite[Example 5.9]{JZ15a} or \cite[Example 5.7]{WZ20}: Its group (von Neumann) algebra is spanned by $\lambda_k,0\le k\le n-1$. One can embed $\mathbb{Z}_n$ to $\mathbb{Z}_{2n}$, so let us assume that $n$ is even. The word length of $k\in\mathbb{Z}_n$ is given by $\ell(k)=\min\{k,n-k\}$. The associated 1-cocycle can be chosen with $H=\mathbb{R}^{\frac{n}{2}}$ and $b\colon\mathbb{Z}_n\to \mathbb{R}^{\frac{n}{2}}$ via
	\begin{equation*}
	b(k)=\begin{cases}
	0,&k=0,\\
	\sum_{j=1}^{k}e_j,&1\le k\le \frac{n}{2},\\
	\sum_{j=k-\frac{n}{2}+1}^{\frac{n}{2}}e_j,&\frac{n}{2}+1\le k\le n-1,
	\end{cases}
	\end{equation*}
	where $(e_j)_{1\le j\le \frac{n}{2}}$ is an orthonormal basis of $\mathbb{R}^{\frac{n}{2}}$. Thus the quantum Markov semigroup associated with $\ell$ satisfies $\CBE(0,n/2)$ and $\CGE(0,n/2)$ for any operator mean. 
\end{example}

\begin{example}
	The symmetric group $S_n$; see \cite[Example 5.8]{WZ20}: Let $\ell$ be the length function induced by the (non-normalized) Hamming metric, that is, $\ell(\sigma)=\#\{j : \sigma(j)\neq j\}$. Let $A_\sigma\in M_n(\IR)$ be the permutation matrix associated with $\sigma$, i.e., $A_\sigma \delta_j =\delta_{\sigma(j)}$. Then the associated cocycle is given by $H=L^2(M_n(\IR),\frac 1 2 \mathrm{tr})$, $b(\sigma)=A_\sigma-1$ and  $\pi(\sigma)=A_\sigma$. Thus the quantum Markov semigroup associated with $\ell$ satisfies $\CBE(0,d)$ and $\CGE(0,d)$ for any operator mean with $d=\min\{|S_n|-1,n^2\}$.
\end{example}

\subsection{Depolarizing Semigroup}

Let $\tau$ be the normalized trace on $\M=M_d(\IC)$. The depolarizing semigroup $M_d(\IC)$ is defined by
\begin{equation*}
P_t\colon M_d(\IC)\to M_d(\IC),\,P_t a=e^{-t}a+(1-e^{-t})\tau(a)\un.
\end{equation*}
Its generator is given by $\L a=a-\tau(a)\un$. We will show that $(P_t)$ satisfies $\BE(1/2,2d)$ and $\GE(1/2,2d)$ for any operator mean $\Lambda$.

Recall that the generator admits a Lindblad form:
$$\L=\sum_{j\in \J}\partial_j^\dagger\partial_j.$$
Since $\partial_j(\un)=0$, we have $\partial_j P_t=e^{-t}\partial_j.$ Then
\begin{equation*}
\norm{\partial P_t a}_\rho^2=e^{-2t}\norm{\partial a}_{\rho}^2.
\end{equation*}
Fix $a\in \M$ and $\rho\in\M_+$ with $\tau(\rho)=1$. By positive homogeneity (Lemma \ref{lem:mean} (a)), concavity (Lemma \ref{lem:mean} (b)) and the definition of operator mean $\Lambda$, we get
\begin{align*}
\|\partial_j a\|^2_{P_t\rho}
= &\langle \Lambda(e^{-t}L(\rho)+(1-e^{-t})L(\un),e^{-t}R(\rho)+(1-e^{-t})R(\un))\partial_j a,\partial_j a\rangle\\
\geq &e^{-t}\langle \Lambda(L(\rho),R(\rho)) \partial_j a,\partial_j a\rangle+(1-e^{-t})\langle \partial_j a,\partial_j a\rangle.
\end{align*}
So 
\begin{equation*}
\|\partial a\|^2_{P_t\rho}\ge e^{-t}\|\partial a\|_{\rho}^2+(1-e^{-t})\langle \L a, a\rangle.
\end{equation*}
Note that $\L^2=\L$ and $P_t\rho\leq  d \un$. All combined, and using Cauchy-Schwarz inequality for $\tau(\rho\cdot)$, we obtain
\begin{align*}
\norm{\partial a}^2_{P_t\rho}&\geq e^{-t}\norm{\partial a}_\rho^2+(1-e^{-t})\langle \L a,\L a\rangle\\
&\geq e^{-t}\norm{\partial a}_\rho^2+\frac{1-e^{-t}}{d}\tau(\abs{\L a}^2P_t\rho)\\
&\geq e^{-t}\norm{\partial a}_\rho^2+\frac{1-e^{-t}}{d}\abs{\tau((\L a)P_t\rho)}^2\\
&=e^{t}\norm{\partial P_t a}_\rho^2+\frac{1-e^{-t}}{d}\abs{\E(a,P_t\rho)}^2,
\end{align*}
or equivalently,
$$\norm{\partial P_t a}_\rho^2\le e^{-t}\norm{\partial a}^2_{P_t\rho}-f(t)\abs{\E(a,P_t\rho)}^2.$$
Here $f(t)=(e^{-t}-e^{-2t})/d$, and it is easy to see $f(0)=0$ and $f'(0)=1/d$. By Remark \ref{rk:implicit constant_GE}, $(P_t)$ satisfies $\GE(1/2,2d)$. 

Choosing $\Lambda$ as the left trivial mean, we actually proved (without using Cauchy-Schwarz inequality):
\begin{equation*}
\tau(P_t(|\partial a|^2)\rho)
\geq e^{-t}\tau(|\partial a|^2\rho)+\frac{1-e^{-t}}{d}\tau(|\L a|^2P_t\rho).
\end{equation*}
Since both sides agree at $t=0$, we obtain by taking derivative at $t=0$ that 
\begin{equation*}
\Gamma_2(a)\ge \frac{1}{2}\Gamma(a)+\frac{1}{2d}|\L a|^2.
\end{equation*}
So $(P_t)$ satisfies $\BE(1/2,2d)$.

\section{Curvature-dimension conditions without assuming tracial symmetry}\label{sec:conclusion}

In plenty of applications one encounters quantum Markov semigroups that are not necessarily tracially symmetric, but only satisfy the detailed balance condition $\sigma$-DBC (with $\sigma\neq \un$) we mentioned in Section \ref{sec:QMS}. Many of the results from this article still apply in this case, with one important caveat, as we will discuss here.

The definition of the Bakry--Émery gradient estimate $\BE(K,N)$ makes sense for arbitrary quantum Markov semigroups on matrix algebras without any change, and all the consequences we proved stay valid in this more general setting with essentially the same proofs.

The gradient estimate $\GE(K,N)$ relies on the Lindblad form of the generator of the semigroup. By Alicki's theorem, a similar Lindblad form exists for generators of quantum Markov semigroups satisfying the $\sigma$-DBC, and the norms $\norm{\xi}_\rho$ have been defined in this setting in \cite{CM17,CM20} -- in fact, instead of a single operator mean one can choose a family of operator connections that depends on the index $j$. With this norm, one can formulate $\GE(K,N)$ as 
\begin{equation*}
\norm{\partial P_t a}_\rho^2\leq e^{-2Kt}\norm{\partial a}^2_{P_t^\dagger \rho}-\frac{1-e^{-2Kt}}{KN}\abs{\tau((\L P_t a)\rho)}^2,
\end{equation*}
where one now has to distinguish between $P_t$ and $P_t^\dagger$ because of the lack of tracial symmetry.

The connection between a generalization of the metric $\W$, the semigroup $(P_t)$ and the relative entropy still remains true in this more general setting with an appropriate modification of the definition of $\W$ \cite{CM17,CM20}, so that the identification of $\GE(K,N)$ with the $(K,N)$-convexity condition for an entropy functional from Theorem \ref{thm:char_(K,N)-convexity} along with its applications also has an appropriate analog for quantum Markov semigroups satisfying the $\sigma$-DBC.

However, the criteria from Proposition \ref{prop:BE_intertwining} and Theorem \ref{thm:CD under intertwining}, which actually allow us to verify $\BE(K,N)$ and $\GE(K,N)$ in concrete examples, rely crucially on the Lindblad form of generators of tracially symmetric quantum Markov semigroups and do not immediately carry over to the $\sigma$-detailed balance case. Thus the question of proving $\BE(K,N)$ and $\GE(K,N)$ for not necessarily tracially symmetric quantum Markov semigroups remains open, so its usefulness in this case is still to be proven.

\bibliography{References}
\bibliographystyle{alpha}
\end{document}